\documentclass[11pt]{article}

\evensidemargin0cm \oddsidemargin0cm \textwidth16cm
\textheight23cm \topmargin-2cm

\usepackage{amsmath}
\usepackage{amsthm}
\usepackage{amsfonts}
\usepackage{bbm}
\usepackage{amssymb}
\usepackage{color}

\usepackage{xcolor}

\newcommand{\mmp}{\mathbb{P}}

\newcommand{\me}{\mathbb{E}}
\newcommand{\E}{\mathbb{E}}

\newcommand{\mr}{\mathbb{R}}
\newcommand{\mn}{\mathbb{N}}

\newcommand{\mm}{\mathcal{M}}

\DeclareMathOperator{\1}{\mathbbm{1}}
\DeclareMathOperator{\V}{\mathbb{V}}
\DeclareMathOperator{\D}{\mathcal{D}}
\newcommand{\R}{\mathbb{R}}
\DeclareMathOperator{\F}{\mathcal{F}}
\DeclareMathOperator{\N}{\mathbb{N}}
\DeclareMathOperator{\C}{\mathbb{C}}

\newtheorem{thm}{Theorem}[section]
\newtheorem{lemma}[thm]{Lemma}

\newtheorem{assertion}[thm]{Proposition}
\theoremstyle{definition}

\theoremstyle{remark}
\newtheorem{rem}[thm]{Remark}

\begin{document}
\title{On $L^p$-convergence of the Biggins martingale with complex parameter}

\author{Alexander Iksanov\footnote{Faculty of Computer Science and Cybernetics,
Taras Shevchenko National University of Kyiv, 01601 Kyiv, Ukraine;
\ e-mail: iksan@univ.kiev.ua}, \ Xingang Liang\footnote{School of
Science, Beijing Technology and Business University, 100048
Beijing, China; \ e-mail: liangxingang@th.btbu.edu.cn} \ and \
Quansheng Liu\footnote{Laboratoire de Math\'{e}matiques de
Bretagne Atlantique, UMR 6205, Universit\'{e} de Bretagne-Sud, F-56017 Vannes, France; \ e-mail:
quansheng.liu@univ-ubs.fr}}

\maketitle
\begin{abstract}
\noindent We prove necessary and sufficient conditions for the
$L^p$-convergence, $p>1$, of the Biggins martingale with complex
parameter in the supercritical branching random walk. The results
and their proofs are much more involved (especially in the case
$p\in (1,2)$) than those for the Biggins martingale with real
parameter. Our conditions are ultimate in the case $p\geq 2$ only.
\end{abstract}
\noindent Keywords: Biggins martingale with complex parameter;
branching random walk; $L^p$-convergence
\\ 2010 Mathematics Subject Classification: Primary 60G42,
60F25; Secondary 60J80

\section{Introduction}

We start by recalling the definition of the branching random walk.
Consider an individual, the ancestor, located at the origin of the
real line at time $n=0$. At time $n=1$ the ancestor produces a
random number $J$ of offspring which are placed at points of the
real line according to a random point process $\mm = \sum_{i=1}^J
\delta_{X_i}$ on $\R$ with intensity measure $\mu$ (particularly,
$J = \mm(\R)$). The random variable $J$ is allowed to be infinite
with positive probability. The first generation formed by the
offspring of the ancestor produces the second generation whose
displacements with respect to their mothers are distributed
according to independent copies of the same point process $\mm$.
The second generation produces the third one, and so on. All
individuals act independently of each other.

More formally, let $\V=\cup_{n\in\mn_0}\mn^n$ be the set of all
possible individuals. The ancestor is identified with the empty
word $\varnothing$ and its position is $S(\varnothing)=0$. On some
probability space $(\Omega, \F, \mmp)$ let $(\mm(u))_{u \in \V}$
be a family of independent identically distributed (i.i.d.)\
copies of the point process $\mm$. An individual $u = u_1\ldots
u_n$ of the $n$th generation whose position on the real line is
denoted by $S(u)$ produces at time $n+1$ a random number $J(u)$ of
offspring which are placed at random locations on $\R$ given by
the positions of the point process
$$
\sum_{i=1}^{J(u)} \delta_{S(u) + X_i(u)}
$$
where $\mm(u) = \sum_{i=1}^{J(u)} \delta_{X_i(u)}$ and $J(u)$ is
the number of points in $\mm(u)$. The offspring of the individual
$u$ are enumerated by $ui = u_1 \ldots u_n i$, where
$i=1,\ldots,J(u)$ (if $J(u)<\infty$) or $i=1,2,\ldots$ (if
$J(u)=\infty$), and the positions of the offspring are denoted by
$S(ui)$.  Note that no assumptions are imposed on the dependence
structure of the random variables $J(u), X_1(u),X_2(u),\ldots$ for
fixed $u\in\V$. The point process of the positions of the $n$th
generation individuals will be denoted by $\mm_n$ so that
$\mm_0=\delta_0$ and
$$
\mm_{n+1} = \sum_{|u|=n} \sum_{i=1}^{J(u)} \delta_{S(u)+X_i(u)},
$$
where, by convention, $|u|=n$ means that the sum is taken over all
individuals of the $n$th generation rather than over all
$u\in\N^n$. The sequence of point processes $(\mm_n)_{n \in
\mn_0}$ is then called a \emph{branching random walk} (BRW).

Throughout the paper, we assume that the BRW is
\emph{supercritical}, that is $\E J>1$. In this case, the event
$\mathcal{S}$ that the population survives has positive
probability. Note that, provided that $J<\infty$ almost surely
(a.s.),\ the sequence $(\mm_n(\mr))_{n\in\mn_0}$ of generation
sizes in the BRW forms a Galton--Watson process.

The Laplace transform of the intensity measure $\mu$
\begin{equation*}    \label{eq:m}
m(\lambda):= \int_\R e^{-\lambda x}\mu({\rm d}x)=\me
\sum_{|u|=1}e^{-\lambda S(u)},\quad \lambda=\theta+{\rm
i}\gamma\in \mathbb{C}
\end{equation*}
plays an important role in what follows. Throughout the paper we
reserve the notation $\theta$ for the real part of $\lambda$ and
$\gamma$ for the imaginary part of $\lambda$, and assume that
$$\D:= \{\lambda\in \C: m(\lambda)~\text{converges
absolutely}\}=\{\theta\in\R: m(\theta)<\infty\}+{\rm i}\R\neq \oslash.$$
Further, we define the sets
$$\D_0:=\D\cap \{\lambda\in \C: |m(\lambda)|=0\},\quad \D_{\neq
0}=\D\backslash\D_0.$$ For $\lambda\in \D_{\neq 0}$ and
$n\in\mn_0$ set $$Z_n(\lambda):=\frac{1}{m(\lambda)^n}\int_\R
e^{-\lambda x}\mm_n({\rm
d}x)=\frac{1}{m(\lambda)^n}\sum_{|u|=n}e^{-\lambda S(u)}.$$ Let
$\F_0$ be the trivial $\sigma$-field and $\F_n$ the $\sigma$-field
generated by the first $n$ generations, that is,
$\F_n:=\sigma(\mm(u): u\in \cup_{k=0}^{n-1}\N^k)$. The sequence
$(Z_n(\lambda), \F_n)_{n\in\mn_0}$ forms a complex-valued
martingale of mean one that we call the Biggins martingale with
complex parameter. A non-exhaustive list of very recent articles
investigating these objects includes \cite{Damek+Mentemeier:2018,
Grubel+Kabluchko:2016, Iksanov+Kolesko+Meiners:2018+,
Kolesko+Meiners:2017}. We would like to stress that the Biggins
martingale with complex parameter has received much less attention
than its counterpart with real parameter and similar martingale
related to a branching Brownian motion. See
\cite{Hartung+Klimovsky:2018,Maillard+Pain:2019+} for recent
contributions in the latter case.

The purpose of this article is to provide necessary and sufficient
conditions for the $L^p$-convergence of the martingale
$(Z_n(\lambda))_{n\in\mn_0}$ for $p>1$. Our main results, Theorems
\ref{main3}, \ref{main4} and \ref{main2}, improve upon Theorem 1
in \cite{Biggins:1992} and Theorem 5.1.1 in the unpublished thesis
\cite{Liang:2010} which give sufficient conditions for the
aforementioned convergence in the cases $p\in (1,2]$ and $p>2$,
respectively. Necessary and sufficient conditions for the
$L^1$-convergence of $(Z_n(\lambda))_{n\in\mn_0}$ are beyond our
reach. Finding them seems to be a major open problem for the
Biggins martingales with complex parameter. For the time being,
our necessary and sufficient conditions for the $L^p$-convergence
for $p$ close to $1$ can be used as (non-optimal) sufficient
conditions for the $L^1$-convergence.

The rest of the paper is organized as follows. We give some
preliminaries in Section \ref{prelim}. Our results are formulated
in Section \ref{results} and then proved in Section \ref{proofs}.

\section{Preliminaries}\label{prelim}
Let $\lambda=\theta+{\rm i}\gamma \in \D$ be fixed. Keeping in
mind the inequality $|m(\lambda)|\leq m(\theta)$ we distinguish
three cases: $$ \mbox{ (I) }  |m(\lambda)|=m(\theta); \quad \mbox{
(II) }  0<|m(\lambda)|<m(\theta); \quad \mbox{ (III)
}|m(\lambda)|=0.$$

Perhaps, it is not obvious that Case III can occur. To convince
the reader we give an example of the BRW satisfying
$m(\theta)<\infty$ and $m(\lambda)=0$ for some $\lambda\in\C$. Let
$$\mu({\rm d}x)=\frac{2}{\pi}\frac{e^{\theta x}(1-\cos
x)}{x^2}\1_\R (x){\rm d}x.$$ Then $m(\theta)=2$ and
$$m(\lambda)=\frac{4}{\pi}\int_\mr e^{-{\rm i}\gamma
x}x^{-2}(1-\cos x){\rm d}x=2(1-|\gamma|)\1_{(-1,1)}(\gamma).$$ In
particular, $m(\lambda)=0$ whenever $|\gamma|>1$.

We do not touch Case III in this paper, just because the sequence
$(Z^{(0)}_n(\lambda), \F_n)_{n\in\mn_0}$ defined for $\lambda\in
\D_0$ by
$$Z^{(0)}_n(\lambda):=\int_\R e^{-\lambda x}\mm_n({\rm
d}x)=\sum_{|u|=n}e^{-\lambda S(u)}$$ does not form a martingale,
for it is comprised of complex-valued martingale differences.

\bigskip \noindent {\sc \underline{Case I}: $|m(\lambda)|=m(\theta)$.}
Since $m(\lambda)=e^{{\rm i}\varphi} m(\theta)$ for some
$\varphi\in [0,2\pi)$ we infer
$$\me \sum_{|u|=1}e^{-\theta S(u)}\Big(e^{-{\rm i}(\varphi+\gamma
S(u))}-1\Big)=0$$ and thereupon $e^{-{\rm i}\gamma S(u)}=e^{{\rm
i}(\varphi+2\pi k)}=e^{{\rm i}\varphi}$ for integer $k$ whenever
$|u|=1$. This gives $Z_n(\lambda)= Z_n(\theta)$ for $n\in\mn$ a.s.
Therefore, $(Z_n(\lambda), \F_n)_{n\in\mn_0}$ is a nonnegative
unit mean martingale.

Proposition \ref{conv_Lp} reminds a criterion for the
$L^p$-convergence ($p>1$) of the Biggins martingale with real
parameter. The result is well-known and can be found in Theorem
2.1 of \cite{Liu:2000}, Corollary 5 of \cite{Iksanov:2004},
Theorem 3.1 of \cite{Alsmeyer+Kuhlbusch:2010}, and perhaps some
other articles.

\begin{assertion}\label{conv_Lp}
Let $p>1$ and $m(\theta)<\infty$ for some $\theta\in\mr$. Then the
martingale $(Z_n(\theta))_{n\in\mn_0}$ converges in $L^p$ if, and
only if, $$\E [Z_1(\theta)]^p<\infty\quad \text{and}\quad
\frac{m(p\theta)}{m(\theta)^p}<1.$$
\end{assertion}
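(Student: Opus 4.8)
The plan is to reduce the statement to a uniform moment bound and then exploit the branching recursion over several generations at once. Since $(Z_n(\theta))$ is a nonnegative mean-one martingale it converges a.s., a martingale bounded in $L^p$ with $p>1$ converges in $L^p$ (Doob's maximal inequality and dominated convergence), and conversely $L^p$-convergence forces $\sup_n\E Z_n(\theta)^p<\infty$; so it suffices to show that $\sup_n\E Z_n(\theta)^p<\infty$ is equivalent to the two displayed conditions. Abbreviate $Z_n=Z_n(\theta)$ and $\rho:=m(p\theta)/m(\theta)^p$, and for $|u|=k$ set $B_u:=m(\theta)^{-k}e^{-\theta S(u)}$, so that $Z_k=\sum_{|u|=k}B_u$ and, conditionally on $\F_k$, the decomposition $Z_{n+k}-Z_k=\sum_{|u|=k}B_u\big(Z_n^{(u)}-1\big)$ exhibits $Z_{n+k}-Z_k$ as a sum of conditionally independent, conditionally centered terms, the $Z_n^{(u)}$ being i.i.d.\ copies of $Z_n$ attached to the disjoint subtrees; the branching identity $\E\sum_{|u|=k}e^{-\lambda S(u)}=m(\lambda)^k$ gives $\E\sum_{|u|=k}B_u^p=\rho^k$.

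For necessity, specializing $\sup_n\E Z_n^p<\infty$ to $n=1$ gives $\E[Z_1(\theta)]^p<\infty$ at once. Next, the elementary bound $\big(\sum_i a_i\big)^p\ge\sum_i a_i^p$ for nonnegative $a_i$ and $p\ge1$, applied to the above decomposition with $k=1$, yields $\E Z_{n+1}^p\ge\rho\,\E Z_n^p$, hence $\E Z_n^p\ge\rho^{\,n-1}\E Z_1^p$, so $\rho\le1$. To exclude $\rho=1$ I would pass to the limit in $Z_{n+1}=\sum_{|u|=1}B_uZ_n^{(u)}$ — the $L^p$-convergence of the right-hand side follows from the moment inequalities used in the sufficiency part — to obtain the fixed-point identity $Z_\infty=\sum_{|u|=1}B_uZ_\infty^{(u)}$ with i.i.d.\ copies $Z_\infty^{(u)}$; then $\rho=1$ would force $\E\big[(\sum_{|u|=1}B_uZ_\infty^{(u)})^p-\sum_{|u|=1}B_u^p(Z_\infty^{(u)})^p\big]=0$, so this nonnegative integrand vanishes a.s., which in turn forces at most one summand $B_uZ_\infty^{(u)}$ to be positive a.s.; since $\E J>1$ gives $\mmp(J\ge2)>0$ and $\E Z_\infty=1$ gives $\mmp(Z_\infty>0)>0$, this is impossible.

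For sufficiency, assume $\E[Z_1(\theta)]^p<\infty$ and $\rho<1$. A short induction on $k$ — using at each step the $k=1$ instance of the moment estimate below — shows $\E Z_k^p<\infty$ for every fixed $k$; put $d_n:=\|Z_n-1\|_p<\infty$. Now apply, conditionally on $\F_k$, the von Bahr--Esseen inequality when $p\in(1,2]$ and Rosenthal's inequality when $p>2$ to $Z_{n+k}-Z_k=\sum_{|u|=k}B_u(Z_n^{(u)}-1)$, and take expectations; using $\E\sum_{|u|=k}B_u^p=\rho^k$ and, when $p>2$, the estimates $\sum_{|u|=k}B_u^2\le(\max_uB_u)Z_k\le\big(\sum_{|u|=k}B_u^p\big)^{1/p}Z_k$, the Cauchy--Schwarz inequality, and the uniform bound $\sup_n\E Z_n^2<\infty$ (the case $p=2$, which holds because $m(2\theta)/m(\theta)^2<1$ by convexity of $s\mapsto\log m(s\theta)$) to control the second-moment term of Rosenthal's inequality, one arrives at an estimate of the form $d_{n+k}\le c\,d_n+c_k$, where $c=(C_p\rho^k)^{1/p}$ for a constant $C_p$ depending only on $p$, and $c_k<\infty$ is fixed once $k$ is. Since $\rho<1$, fix $k$ so large that $C_p\rho^k<1$, hence $c<1$; iterating the recursion along each residue class of $n$ modulo $k$ yields $\limsup_nd_n\le c_k/(1-c)<\infty$, and as every $d_n$ is finite this gives $\sup_nd_n<\infty$, i.e.\ $\sup_n\E Z_n^p<\infty$.

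The step I expect to be the main obstacle is obtaining the \emph{sharp} threshold $\rho<1$, rather than $\rho$ below the (typically large) constant $C_p$ in the von Bahr--Esseen or Rosenthal inequality, which is all a one-generation estimate delivers. The remedy is precisely the $k$-generation contraction above: the multiplicative gain $\rho^k$ is driven below $C_p^{-1}$ while the finiteness of $\E Z_k^p$ at the fixed scale $k$ keeps the additive remainder $c_k$ under control. A secondary difficulty, present only for $p>2$, is that Rosenthal's inequality produces a second-moment term; handling it via the interpolation bound above is what forces the small detour through the already-settled case $p=2$.
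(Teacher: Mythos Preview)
The paper does not actually prove Proposition~\ref{conv_Lp}: it is stated as a known result and attributed to \cite{Liu:2000}, \cite{Iksanov:2004}, and \cite{Alsmeyer+Kuhlbusch:2010}. So there is no in-paper argument to compare against; your proposal supplies a proof where the paper gives only citations.

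Your argument is correct and is essentially the standard one from those references. The sufficiency part---applying von Bahr--Esseen for $p\in(1,2]$ and Rosenthal for $p>2$ to the decomposition $Z_{n+k}-Z_k=\sum_{|u|=k}B_u(Z_n^{(u)}-1)$, then iterating the resulting inequality $d_{n+k}\le(C_p\rho^k)^{1/p}d_n+c_k$ along residue classes after choosing $k$ with $C_p\rho^k<1$---is exactly the contraction technique of Liu~\cite{Liu:2000}. Your observation that convexity of $s\mapsto\log m(s\theta)$ forces $m(2\theta)/m(\theta)^2<1$ once $m(p\theta)/m(\theta)^p<1$ with $p>2$, allowing you to invoke the already-settled $L^2$ case to control the Rosenthal second-moment term, is the right way to close that loop. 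The necessity argument---$\me Z_n^p\ge\rho^{n-1}\me Z_1^p$ from superadditivity of $x\mapsto x^p$, followed by the strict-convexity equality case in the fixed-point identity $Z_\infty=\sum_{|u|=1}B_uZ_\infty^{(u)}$ to rule out $\rho=1$---is also standard and sound; the only point worth making explicit is that $\me Z_\infty=1$ (from $L^p$-convergence with $p>1$) is what guarantees $\mmp\{Z_\infty>0\}>0$, which you do use. One small technical remark: the paper permits $J=\infty$ with positive probability, so the sums $\sum_{|u|=k}$ may be infinite; the moment inequalities still apply by truncation and monotone convergence, but this deserves a sentence.
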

\begin{rem}\label{m0}
When $\theta=0$ and $m(0)<\infty$, the condition $m(0)<m(0)^p$
holds automatically because $m(0)>1$ by supercriticality. Hence,
the martingale $(Z_n(0))_{n\in\mn_0}$ converges in $L^p$ if, and
only if, $\E [Z_1(0)]^p<\infty$. This result goes back to
Corollary on p.~714 in \cite{Bingham+Doney:1974}. \end{rem}

Therefore, in Case I we conclude that the martingale
$(Z_n(\lambda))_{n\in\mn_0}$ converges in $L^p$ if, and only if,
the conditions of Proposition \ref{conv_Lp} hold true.

\bigskip \noindent \noindent {\sc \underline{Case II}: $0<|m(\lambda)|<m(\theta)$.} From the preceding discussion it is clear that only this case
gives us a truly complex-valued martingale
$(Z_n(\lambda))_{n\in\mn_0}$, the object we shall concentrate on
in what follows. In our analysis distinguishing the cases $p<2$
and $p\geq 2$ seems inevitable. To explain this point somewhat
informally we restrict our attention to the case $\theta=0$ and
note that the $L^p$-convergence of the martingale
$(Z_n(\lambda))_{n\in\mn_0}$ is regulated, among others, by the
asymptotic behavior of $\me (\sum_{j=1}^n \xi_j^2)^{p/2}$ as
$n\to\infty$ for $\xi_1$, $\xi_2,\ldots$ independent copies of the
random variable $|Z_1(\lambda)-1|$ with finite $p$th moment. If
$p\geq 2$, then $\me \xi_1^2<\infty$ and one expects that $$\me
\Bigg(\sum_{j=1}^n \xi_j^2 \Bigg)^{p/2}~\sim~ (\me
\xi_1^2)^{p/2}n^{p/2},\quad n\to\infty.$$ If $p\in (1,2)$ and $\me
\xi_1^2=\infty$ the last asymptotic relation is no longer true,
and one expects that in typical situations
\begin{equation}\label{impr}
0<{\lim\inf}_{n\to\infty} n^{-p/\alpha}\me \Bigg(\sum_{j=1}^n
\xi_j^2 \Bigg)^{p/2}\leq {\lim\sup}_{n\to\infty} n^{-p/\alpha}\me
\Bigg(\sum_{j=1}^n \xi_j^2 \Bigg)^{p/2}<\infty
\end{equation}
for some $\alpha\in (p,2)$. It seems that the $\alpha$ cannot be
expressed in terms of moments.

Before closing the section we recall that according to the
Kesten-Stigum theorem (see, for instance, Theorem 2.1 on p.~23 in
\cite{Asmussen+Hering:1983}) we have $\lim_{n\to\infty}\,Z_n(0)=0$
a.s.\ whenever $m(0)<\infty$ and $\me Z_1(0)\log^+Z_1(0)=\infty$.
However, by the Seneta-Heyde theorem (see, for instance, Theorem
5.1 on p.~83 and Corollary 5.3 on p.~85 in
\cite{Asmussen+Hering:1983}) there exists a positive slowly
varying function $\ell$ with $\lim_{t\to\infty}\,\ell(t)=\infty$
such that
$$ \lim_{n\to\infty}Z_n(0)\ell(m(0)^n)=\tilde{Z}_\infty(0)$$
for a
random variable $\tilde{Z}_\infty(0)$ which is positive with
positive probability.

\section{Main results}\label{results}

We are ready to state a criterion for the $L^p$-convergence, $p\in
(1,2)$. The cases $\theta=0$ and $\theta\neq 0$ are treated
separately in Theorems \ref{main3} and \ref{main4}, respectively.
\begin{thm}\label{main3}
Let $p\in (1,2)$, $\gamma\in\mr\backslash\{0\}$, $\lambda={\rm
i}\gamma$ and $0<|m(\lambda)|<m(0)<\infty$. Assume that
\begin{equation}\label{imp2}
\me |Z_1(\lambda)|^2<\infty
\end{equation}
or
\begin{equation}\label{imp1}
0<{\lim\inf}_{x\to\infty}x^\alpha\mmp\{|Z_1(\lambda)|>x\}\leq
{\lim\sup}_{x\to\infty}x^\alpha \mmp\{|Z_1(\lambda)|>x\}<\infty
\end{equation}
for some $\alpha\in (1,2)$. If either $\me
Z_1(0)\log^+Z_1(0)=\infty$ and
$$A:=\sum_{n\geq 0}\frac{1}{\ell(m(0)^n)^{p/\alpha}}=\infty,$$ where
$\ell$ is a slowly varying function appearing in the Seneta-Heyde
theorem, and we take $\alpha=2$ when condition \eqref{imp2} holds,
or $\me Z_1(0)\log^+Z_1(0)<\infty$, then the martingale
$(Z_n(\lambda))_{n\in\mn_0}$ converges in $L^p$ if, and only if,
\begin{equation}\label{palpha}
p<\alpha
\end{equation}
and
\begin{equation}\label{mom<2}
\frac{m(0)}{|m(\lambda)|^\alpha}<1.
\end{equation}
If $\me Z_1(0)\log^+Z_1(0)=\infty$ and $A<\infty$, then the
martingale $(Z_n(\lambda))_{n\in\mn_0}$ converges in $L^p$ if, and
only if, condition \eqref{palpha} holds and
\begin{equation}\label{momleq2}
\frac{m(0)}{|m(\lambda)|^\alpha}\leq 1.
\end{equation}
\end{thm}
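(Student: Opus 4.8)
The plan is to reduce the $L^p$-convergence of $(Z_n(\lambda))_{n\in\mn_0}$ to the finiteness of $\E\big(\sum_{k\geq 1}|D_k|^2\big)^{p/2}$, where $D_k:=Z_k(\lambda)-Z_{k-1}(\lambda)$ are the martingale differences, and then to compute the latter via the branching structure. Since $p>1$, the martingale converges in $L^p$ if and only if it is $L^p$-bounded, and by the Burkholder--Davis--Gundy and Doob inequalities (applied to the real and imaginary parts) the latter is equivalent to $\E\big(\sum_{k\geq 1}|D_k|^2\big)^{p/2}<\infty$. Concavity of $x\mapsto x^{p/2}$ on $[0,\infty)$ gives the one-sided bound $\E\big(\sum_{k\geq 1}|D_k|^2\big)^{p/2}\leq\sum_{k\geq 1}\E|D_k|^p$, which suffices for the ``if'' part; the ``only if'' part requires a more careful lower estimate.

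First I would record the branching decomposition
$$D_k=\frac{1}{m(\lambda)^{k-1}}\sum_{|v|=k-1}e^{-\iii\gamma S(v)}\big(Z_1^{(v)}(\lambda)-1\big),$$
where, conditionally on $\F_{k-1}$, the variables $\big(Z_1^{(v)}(\lambda)-1\big)_{|v|=k-1}$ are i.i.d.\ centered copies of $Z_1(\lambda)-1$ and $|e^{-\iii\gamma S(v)}|=1$ because $\theta=0$. The Marcinkiewicz--Zygmund inequalities for sums of conditionally independent centered (complex-valued) summands then give, with $N_{k-1}:=\mm_{k-1}(\R)$ the size of the $(k-1)$st generation and $(\xi_j)_{j\geq 1}$ i.i.d.\ copies of $|Z_1(\lambda)-1|$ independent of $N_{k-1}$,
$$\E\big[\,|D_k|^p\mid\F_{k-1}\,\big]\asymp\frac{1}{|m(\lambda)|^{p(k-1)}}\,\E\Big[\Big(\sum_{j=1}^{N_{k-1}}\xi_j^2\Big)^{p/2}\,\Big|\,N_{k-1}\Big].$$
The next ingredient is the two-sided moment-of-sum estimate $\E\big(\sum_{j=1}^n\xi_j^2\big)^{p/2}\asymp n^{p/\alpha}$ for $n\geq 1$, a sharp form of \eqref{impr}: under \eqref{imp2} (with $\alpha=2$) it follows from the strong law of large numbers together with Jensen's inequality, and under \eqref{imp1} with $\alpha\in(1,2)$ and $p<\alpha$ it follows from a truncation argument, using that $\xi_1^2$ lies in the domain of attraction of a stable law of index $\alpha/2<1$ (lower bound via $\max_{j\leq n}\xi_j^2$, upper bound via truncation at level $n^{2/\alpha}$). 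Combining the two displays and taking expectations yields
$$\E|D_k|^p\asymp\Big(\frac{m(0)}{|m(\lambda)|^\alpha}\Big)^{(k-1)p/\alpha}\E W_{k-1}^{p/\alpha},\qquad W_{k-1}:=\frac{N_{k-1}}{m(0)^{k-1}},$$
$(W_n)_{n\in\mn_0}$ being the Galton--Watson martingale; note $Z_1(0)=N_1/m(0)$, so $\E Z_1(0)\log^+Z_1(0)=\infty$ is precisely the non-Kesten--Stigum regime for $(W_n)$.

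It remains to control $\E W_{k-1}^{p/\alpha}$. Since $p/\alpha<1$ one always has $\E W_n^{p/\alpha}\leq 1$. If $\E Z_1(0)\log^+Z_1(0)<\infty$, then $W_n\to W$ in $L^1$ with $\E W=1$, $W>0$ on $\mathcal{S}$, so $\E W_n^{p/\alpha}\to\E W^{p/\alpha}\in(0,1]$ is bounded away from $0$. If $\E Z_1(0)\log^+Z_1(0)=\infty$, then $W_n\to 0$ a.s.\ by the Kesten--Stigum theorem, $\E W_n^{p/\alpha}\to 0$ (uniform integrability, since $\sup_n\E W_n^{(p/\alpha)(1+\delta)}\leq 1$ for small $\delta$), and the Seneta--Heyde norming, refined to the level of $p/\alpha$th moments, gives $\E W_n^{p/\alpha}\asymp\ell(m(0)^n)^{-p/\alpha}$, so $\sum_n\E W_n^{p/\alpha}\asymp A$. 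Sufficiency is then immediate: in each of the three cases (i) $\E Z_1(0)\log^+Z_1(0)=\infty$, $A=\infty$, \eqref{palpha} and \eqref{mom<2}; (ii) $\E Z_1(0)\log^+Z_1(0)<\infty$, \eqref{palpha} and \eqref{mom<2}; (iii) $\E Z_1(0)\log^+Z_1(0)=\infty$, $A<\infty$, \eqref{palpha} and \eqref{momleq2}, the series $\sum_k\E|D_k|^p$ converges (geometrically, or --- at the boundary $m(0)=|m(\lambda)|^\alpha$ in case (iii) --- by comparison with $A<\infty$), whence $\E\big(\sum_k|D_k|^2\big)^{p/2}<\infty$ and the martingale converges in $L^p$. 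For necessity: in the \eqref{imp1} case with $p\geq\alpha$ one has $\E|Z_1(\lambda)|^p=\infty$, precluding $L^p$-boundedness, so \eqref{palpha} is necessary; if $m(0)/|m(\lambda)|^\alpha>1$, then $\E|D_k|^p\to\infty$ (the geometric factor beats the at-worst slowly varying decay of $\E W_{k-1}^{p/\alpha}$), and since $\sum_k|D_k|^2\geq|D_k|^2$ we get $\E\big(\sum_k|D_k|^2\big)^{p/2}=\infty$; if $m(0)=|m(\lambda)|^\alpha$ and $\E Z_1(0)\log^+Z_1(0)<\infty$, then $\E|D_k|^p\to\E W^{p/\alpha}>0$, which contradicts $D_k\to 0$ in $L^p$, a necessary consequence of $L^p$-convergence.

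The main obstacle is the remaining necessity case: $m(0)=|m(\lambda)|^\alpha$ together with $\E Z_1(0)\log^+Z_1(0)=\infty$ and $A=\infty$. Here $\E|D_k|^p\to 0$, so no single term helps; instead one must show $\sum_{k\geq 1}|D_k|^2=\infty$ (equivalently $\E\big(\sum_k|D_k|^2\big)^{p/2}=\infty$) a.s.\ on $\mathcal{S}$. On $\mathcal{S}$ the Seneta--Heyde theorem gives $N_{k-1}\asymp m(0)^{k-1}/\ell(m(0)^{k-1})$ for large $k$, so conditionally on $\F_{k-1}$ the increment $D_k$ is, up to the scale $\asymp\ell(m(0)^{k-1})^{-1/\alpha}$, a sum of $N_{k-1}$ i.i.d.\ centered heavy-tailed (index $\alpha$) summands; by the single-big-jump principle its conditional tail is $\asymp\ell(m(0)^{k-1})^{-1}t^{-\alpha}$ for large $t$. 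The conclusion should then follow from a conditional Borel--Cantelli argument matched to the full heavy tail of $D_k$ (rather than a single threshold), using $A=\sum_n\ell(m(0)^n)^{-p/\alpha}=\infty$ and the regular variation of $n\mapsto\ell(m(0)^n)^{-p/\alpha}$ to pass from a subset of indices of positive density to the whole series. This step, together with the two-sided moment-of-sum bound under the mere comparability hypothesis \eqref{imp1}, is where the real work lies.
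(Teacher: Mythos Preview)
Your overall architecture matches the paper's: reduce $L^p$-convergence to finiteness of $\me R^{p/2}$ with $R=\sum_{k\geq 1}|D_k|^2$ via Burkholder; decompose each $D_k$ as a conditionally i.i.d.\ sum; use the two-sided estimate $\me\big(\sum_{j\leq n}\xi_j^2\big)^{p/2}\asymp n^{p/\alpha}$ (this is the paper's Lemma~\ref{mom_est}, case $\theta=0$); and control $\me[Z_n(0)]^{p/\alpha}$ either by Kesten--Stigum or by the Seneta--Heyde moment asymptotic $\me[Z_n(0)]^{p/\alpha}\sim c\,\ell(m(0)^n)^{-p/\alpha}$ (the paper's Lemma~\ref{senhey}). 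Sufficiency via subadditivity of $x^{p/2}$ is identical to the paper.

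The substantive difference is in the \emph{necessity} lower bound. You rely on the single-term estimate $\me R^{p/2}\geq \me|D_k|^p$ (together with $\me|D_k|^p\to 0$ under $L^p$-convergence), which handles $m(0)/|m(\lambda)|^\alpha>1$ and the boundary case under Kesten--Stigum. For the remaining boundary case $m(0)=|m(\lambda)|^\alpha$, non-$x\log x$, $A=\infty$, you propose a conditional Borel--Cantelli/single-big-jump argument that you yourself flag as incomplete. The paper avoids this entirely by a different device: for \emph{any} summable positive sequence $(a_n)$ with $a=\sum a_n$, concavity of $x\mapsto x^{p/2}$ (applied with weights $a_n/a$) gives the pointwise inequality
\[
R^{p/2}\;\geq\; a^{p/2-1}\sum_{n\geq 0} a_n^{1-p/2}\,|D_{n+1}|^p,
\]
whence, after taking expectations and inserting the two-sided bound on $\me|D_{n+1}|^p$,
\[
\me R^{p/2}\;\geq\; c\,a^{p/2-1}\sum_{n\geq 0} a_n^{1-p/2}\Big(\frac{m(0)}{|m(\lambda)|^\alpha}\Big)^{np/\alpha}\me[Z_n(0)]^{p/\alpha}.
\]
The sequence $(a_n)$ is a free parameter to be optimized, so this produces a \emph{series} lower bound rather than a single-term one; the paper then reads off the necessary conditions by choosing $(a_n)$ appropriately in each regime. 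This weighted-concavity trick is the idea your proposal is missing, and it replaces your projected Borel--Cantelli argument with a short analytic computation.

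Two smaller remarks. First, the paper proves the two-sided bound $\me\big(\sum_{j\leq n}\xi_j^2\big)^{p/2}\asymp n^{p/\alpha}$ under \eqref{imp1} via Laplace transforms and convergence to a stable law (formula \eqref{moments} and dominated convergence), not by truncation/max; the Laplace route is cleaner and yields the constants needed for the $\theta\neq 0$ companion Lemma. Second, your claim that $\me W_n^{p/\alpha}\asymp \ell(m(0)^n)^{-p/\alpha}$ in the non-$x\log x$ regime is exactly Lemma~\ref{senhey}; the paper proves it again via the Laplace representation \eqref{moments} and a submartingale domination, so you should not treat it as a black box.
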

\begin{rem}
A perusal of the proof of Theorem \ref{main3} reveals that
conditions \eqref{imp1} and \eqref{palpha} can be safely replaced
by the (seemingly) less restrictive condition \eqref{impr},
thereby extending the range of applicability of the result.
\end{rem}
\begin{rem}
Let us note that irrespective of the $x\log x$ condition
$Z_n(0)\ell(m(0)^n)$ converges a.s.\ to a random variable which is
positive with positive probability. Here, the slowly varying
function $\ell$ is identically one when $\me
Z_1(0)\log^+Z_1(0)<\infty$. In view of this we can reformulate
Theorem \ref{main3} in a more succinct form: under assumptions
\eqref{imp2} and \eqref{imp1} the martingale $(Z_n({\rm
i}\gamma))_{n\in\mn_0}$ converges in $L^p$, $p\in (1,2)$ if, and
only if, condition \eqref{palpha} holds and
$$\sum_{n\geq 0}\Big(\frac{m(0)}{|m({\rm
i}\gamma)|^\alpha}\Big)^n\frac{1}{\ell(m(0)^n)^{p/\alpha}}<\infty.$$
\end{rem}
\begin{thm}\label{main4}
Let $p\in (1,2)$, $\theta, \gamma\in\mr\backslash\{0\}$,
$\lambda=\theta+{\rm i}\gamma$ and
$0<|m(\lambda)|<m(\theta)<\infty$. Assume that conditions
\eqref{imp2} and \eqref{imp1} hold with the present $\lambda$, and
that the martingale $(Z_n(\alpha\theta))_{n\in\mn_0}$ is uniformly
integrable (we take $\alpha=2$ when condition \eqref{imp2} holds).
Then the martingale $(Z_n(\lambda))_{n\in\mn_0}$ converges in
$L^p$ if, and only if, condition \eqref{palpha} holds and
\begin{equation}\label{mom1<2}
\frac{m(\alpha\theta)}{|m(\lambda)|^\alpha}<1.
\end{equation}
\end{thm}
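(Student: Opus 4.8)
The plan is to reduce Theorem \ref{main4} to the already-proved Theorem \ref{main3} by a standard change-of-measure (spine/tilting) argument that absorbs the real part $\theta$ of $\lambda$ into the underlying branching random walk. Concretely, I would introduce the $\theta$-tilted branching random walk: since $m(\theta)<\infty$, the point process $\widehat{\mm}$ obtained by biasing $\mm$ so that its intensity measure becomes $m(\theta)^{-1}e^{-\theta x}\mu(\mathrm{d}x)$ gives rise to a new BRW $(\widehat{\mm}_n)_{n\in\mn_0}$. Writing $\widehat{S}(u)$ for the positions in the tilted walk, one has the classical identity relating expectations under the two measures through the additive martingale $Z_n(\theta)$; in particular, for any nonnegative or integrable functional $G$,
\begin{equation*}
\E\Bigl[\,\sum_{|u|=n}e^{-\theta S(u)}m(\theta)^{-n}\,G\bigl(S(u),\ \text{subtree at }u\bigr)\Bigr]=\widehat{\E}\,G\bigl(\widehat{S}(v_n),\ \text{subtree at }v_n\bigr),
\end{equation*}
where $v_n$ is the spine particle. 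Under the tilted law, the new Laplace transform is $\widehat{m}(\mathrm{i}\gamma)=m(\theta+\mathrm{i}\gamma)/m(\theta)=m(\lambda)/m(\theta)$, so $|\widehat{m}(\mathrm{i}\gamma)|=|m(\lambda)|/m(\theta)<1=\widehat{m}(0)$, which puts us exactly in the purely imaginary, $\theta=0$ regime of Theorem \ref{main3}. The martingale $Z_n(\lambda)$ for the original walk is, up to the identification above, the purely-imaginary Biggins martingale $\widehat{Z}_n(\mathrm{i}\gamma)$ for the tilted walk.

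The second step is to translate each hypothesis and each conclusion of Theorem \ref{main3} through this dictionary. The moment/tail conditions \eqref{imp2} and \eqref{imp1} on $|Z_1(\lambda)|$ are conditions on a single generation and transfer directly, since $\widehat{Z}_1(\mathrm{i}\gamma)$ has the size-biased law of $Z_1(\lambda)/\E Z_1(\theta)$ type — I would check that the tail index $\alpha$ and the finiteness of the second moment are preserved, which they are because tilting one generation only reweights by a bounded-below-and-above (on the relevant event) density once we condition appropriately; the key point is that $\widehat{\E}|\widehat Z_1(\mathrm i\gamma)|^q<\infty \iff \E|Z_1(\lambda)|^q m(\theta)^{-1}<\infty$ for the truncated contributions, and the heavy-tail exponent is inherited. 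Next, the role played in Theorem \ref{main3} by the condition ``$\me Z_1(0)\log^+Z_1(0)<\infty$'' (equivalently, non-degeneracy of the limit of $Z_n(0)$ without a Seneta–Heyde norming, i.e. $\ell\equiv 1$) becomes, after tilting, the condition that $\widehat{Z}_n(0)\equiv$ the additive martingale of the tilted walk at parameter $0$ be uniformly integrable — and $\widehat{Z}_n(0)$ is precisely (a constant multiple of) $Z_n(\theta\cdot ?)$... more carefully, the additive martingale at parameter $0$ in the tilted walk corresponds to $Z_n(\theta)/(\text{something})$, whose uniform integrability is the Kesten–Stigum $x\log x$ condition for the Galton–Watson-like structure of the tilted walk. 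The hypothesis actually imposed, namely uniform integrability of $(Z_n(\alpha\theta))_{n\in\mn_0}$, is what guarantees (via Proposition \ref{conv_Lp} applied at real parameter $\alpha\theta$, together with $m(\alpha\theta)/m(\theta)^\alpha<1$ being equivalent to \eqref{mom1<2} after noting $\widehat m(0)=1$) that the Seneta–Heyde function in the tilted problem is trivial, so that one lands in the ``$A<\infty$ vacuously / $\ell\equiv1$'' branch of Theorem \ref{main3}. Finally, condition \eqref{mom<2} of Theorem \ref{main3} for the tilted walk reads $\widehat m(0)/|\widehat m(\mathrm i\gamma)|^\alpha<1$, i.e. $1/(|m(\lambda)|/m(\theta))^\alpha<1$, i.e. $m(\theta)^\alpha<|m(\lambda)|^\alpha$ — but that is not quite \eqref{mom1<2}; the correct bookkeeping is that $\widehat{Z}_1(\mathrm i\gamma)$'s tail constant and the geometric factor combine so that the relevant ratio is $m(\alpha\theta)/|m(\lambda)|^\alpha$ rather than $m(\theta)^\alpha/|m(\lambda)|^\alpha$, because the $L^p$-norm computation in the tilted walk involves $\widehat{\E}$ of the $p$-th power of one-generation increments, and $\widehat{\E}|\widehat Z_1(\mathrm i\gamma)|^\alpha$ brings in an extra $m(\theta)^{-1}$ while the tail heaviness brings in $m(\alpha\theta)/m(\theta)$; reconciling these and arriving at the clean condition \eqref{mom1<2} is exactly the point where the real-part shift is genuinely used.

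I would then assemble the equivalence: by Theorem \ref{main3} applied to the tilted walk, $\widehat{Z}_n(\mathrm i\gamma)$ converges in $L^p$ iff $p<\alpha$ and the tilted moment condition holds; undoing the tilt, the former is \eqref{palpha} and the latter is \eqref{mom1<2}; and $L^p$-convergence of $\widehat{Z}_n(\mathrm i\gamma)$ is equivalent to $L^p$-convergence of $Z_n(\lambda)$ because the two martingales are related by a change of measure whose density is the uniformly integrable martingale $Z_n(\theta)/$(its limit), under which $L^p$-bounded sequences correspond. The main obstacle I anticipate is precisely this last equivalence at the level of $L^p$ rather than a.s. or $L^1$: a change of measure does not in general preserve $L^p$-boundedness, so one cannot simply quote ``absolute continuity''; instead I expect one must run the moment estimates directly for $Z_n(\lambda)$ — using the branching recursion $Z_{n+1}(\lambda)=m(\lambda)^{-1}\sum_{|u|=1}e^{-\lambda S(u)}Z_n^{(u)}(\lambda)$, conditioning on $\F_1$, and applying the Topchii–Vatutin / von Bahr–Esseen inequality for sums of independent martingale differences (for $p\in(1,2)$) — and verify that the recursion contracts in $L^p$-norm under \eqref{palpha}–\eqref{mom1<2}. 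In other words, Theorem \ref{main4}'s proof will largely \emph{mirror} that of Theorem \ref{main3}, with the Seneta–Heyde input replaced by the hypothesised uniform integrability of $(Z_n(\alpha\theta))_{n\in\mn_0}$, and the tilting serving mainly to justify \emph{why} \eqref{mom1<2} with $m(\alpha\theta)$ is the right condition and why no extra $A<\infty$ dichotomy appears; the delicate part is controlling the error terms coming from the (possibly heavy-tailed, when \eqref{imp1} holds) one-generation increments so that the critical case can be excluded, which is why the statement only asserts strict inequality \eqref{mom1<2} and does not include a borderline ``$\leq$'' regime.
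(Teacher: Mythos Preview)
Your proposal takes a fundamentally different route from the paper, and the route you chose has a genuine gap that you yourself diagnose but do not close.

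The paper's proof is one sentence: re-run the argument of Theorem~\ref{main3} verbatim, replacing inequality~\eqref{import} by inequality~\eqref{import1} from Lemma~\ref{mom_est}. That lemma already treats the case $\theta\neq 0$ directly and delivers two-sided bounds
\[
c\Big(\frac{m(\alpha\theta)}{|m(\lambda)|^\alpha}\Big)^{np/\alpha}
\;\leq\;
\me\Big(\sum_{|u|=n}|Y_u(\lambda)|^2|Z_1^{(u)}(\lambda)-1|^2\Big)^{p/2}
\;\leq\;
C\Big(\frac{m(\alpha\theta)}{|m(\lambda)|^\alpha}\Big)^{np/\alpha},
\]
with no residual factor $\me[Z_n(\cdot)]^{p/\alpha}$ at all. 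The hypothesis that $(Z_n(\alpha\theta))_{n\in\mn_0}$ is uniformly integrable is consumed \emph{inside} the proof of Lemma~\ref{mom_est} (it is what makes the conditional Laplace transform converge to the stable limit and yields \eqref{inter16}); once \eqref{import1} is in hand, sufficiency follows from subadditivity of $x\mapsto x^{p/2}$ and a geometric series, and necessity from the concavity/weighted-sum lower bound exactly as in Theorem~\ref{main3}. No Seneta--Heyde dichotomy and no series $A$ arise, simply because \eqref{import1} has no $\me[Z_n(0)]^{p/\alpha}$ to analyse. In the $\alpha=2$ subcase (condition~\eqref{imp2}) one uses conditional Jensen as in the proof of Theorem~\ref{main3}, obtaining a factor $\me[Z_n(2\theta)]^{p/2}$ which is bounded by uniform integrability.

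Your tilting/spine strategy, by contrast, never produces a clean argument. You correctly observe that the $\theta$-tilted walk has $\widehat m(0)=1$, so the condition $\widehat m(0)/|\widehat m(\mathrm{i}\gamma)|^\alpha<1$ becomes $m(\theta)^\alpha<|m(\lambda)|^\alpha$, which is \emph{not} \eqref{mom1<2}; the passage you describe from $m(\theta)^\alpha$ to $m(\alpha\theta)$ is left as hand-waving (``reconciling these''), and in fact cannot be achieved by tilting at parameter $\theta$ alone --- the quantity $m(\alpha\theta)$ is not a moment of the $\theta$-tilted intensity. More seriously, you note yourself that absolute continuity of measures does not transfer $L^p$-boundedness, so even if you could set up the dictionary, the equivalence ``$Z_n(\lambda)$ converges in $L^p$ iff $\widehat Z_n(\mathrm{i}\gamma)$ does'' is unjustified. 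Your closing paragraph (``run the moment estimates directly \ldots\ mirror that of Theorem~\ref{main3}'') is the right instinct, but the concrete mechanism you are missing is precisely Lemma~\ref{mom_est} and its inequality~\eqref{import1}: that is where the $m(\alpha\theta)$ appears naturally (via $\sum_{|u|=n}e^{-\alpha\theta S(u)}=m(\alpha\theta)^n Z_n(\alpha\theta)$ inside a stable-law limit computation), and where the uniform-integrability hypothesis is actually used.
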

\begin{rem}
Necessary and sufficient conditions for the uniform integrability
of the Biggins martingale with real parameter were obtained in
increasing generality in \cite{Biggins:1977}, \cite{Lyons:1997}
and \cite{Alsmeyer+Iksanov:2009}. Simple sufficient conditions for
the uniform integrability of the martingale
$(Z_n(\alpha\theta))_{n\in\mn_0}$ are $\me
Z_1(\alpha\theta)\log^+Z_1(\alpha\theta)<\infty$ and
$-\alpha\theta\me \sum_{|u|=1}e^{-\alpha \theta S(u)}S(u)\in
[-\infty, m(\alpha\theta)\log m(\alpha\theta))$.
\end{rem}

Theorem \ref{main4} requires that the martingale
$(Z_n(\alpha\theta))_{n\in\mn_0}$ be uniformly integrable which is
an unpleasant feature. The problem is that it seems that the other
assumptions of Theorem \ref{main4} do not lead to any conclusions
concerning the asymptotics of $\me [Z_n(\alpha
\theta)]^{p/\alpha}$ as $n\to\infty$, when
$(Z_n(\alpha\theta))_{n\in\mn_0}$ is not uniformly integrable
martingale. Although in the latter case there are several results
(see \cite{Aidekon+Shi:2014, Biggins+Kyprianou:1997, Hu+Shi:2009})
concerning distributional convergence of $Z_n(\alpha \theta)a_n$
as $n\to\infty$ for appropriate constants $(a_n)$, the assumptions
imposed in the cited works are too restrictive for our purposes.
Fortunately, there is (at least) one exception arising in the case
$\theta=0$ which allowed us to provide a more complete result in
Theorem \ref{main3}.

Necessary and sufficient conditions given in Theorems \ref{main3}
and \ref{main4} like any other necessary and sufficient conditions
are of mainly theoretical interest. For applications easily
verifiable sufficient conditions are of greater use. Biggins in
Theorem 1 of \cite{Biggins:1992} shows that the conditions $\me [Z_1(\theta)]^\gamma<\infty$ for some $\gamma\in (1,2)$ and
$m(p\theta)/|m(\lambda)|^p<1$ for some $p\in (1,\gamma]$  
are sufficient for the $L^p$-convergence of
$(Z_n(\lambda)_{n\in\mn_0}$. Albeit looking differently
Proposition \ref{main1} given next is essentially equivalent to
the Biggins conditions, the 
 improvement being that we use a
moment condition for $Z_1(\lambda)$ rather than for $Z_1(\theta)$.
\begin{assertion}\label{main1}
Let $p\in (1,2)$, $\gamma\in\mr\backslash\{0\}$,
$\lambda=\theta+{\rm i}\gamma$ and
$0<|m(\lambda)|<m(\theta)<\infty$. The conditions
\begin{equation}\label{inter12}
\me |Z_1(\lambda)|^r<\infty\quad \text{and}\quad
\frac{m(r\theta)}{|m(\lambda)|^r}<1
\end{equation}
for some $r\in [p,2]$ are sufficient for the $L^p$-convergence of
the martingale $(Z_n(\lambda))_{n\in\mn_0}$.
\end{assertion}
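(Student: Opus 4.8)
The plan is to establish the $L^p$-convergence by a direct recursive estimate on the $p$th moments $\me|Z_n(\lambda)-1|^p$, exploiting the branching decomposition together with the von Bahr--Esseen / Topchii--Vatutin inequality for sums of independent centered random variables. First I would write, for $n\geq 1$, the one-step decomposition
\begin{equation*}
Z_{n+1}(\lambda)-Z_n(\lambda)=\frac{1}{m(\lambda)^{n}}\sum_{|u|=n}e^{-\lambda S(u)}\,\frac{1}{m(\lambda)}\Bigl(\sum_{i=1}^{J(u)}e^{-\lambda X_i(u)}-m(\lambda)\Bigr),
\end{equation*}
so that conditionally on $\F_n$ the increment is a sum, over $|u|=n$, of independent mean-zero complex random variables, the $u$th of which has the law of $m(\lambda)^{-n}e^{-\lambda S(u)}(Z_1(\lambda)-1)$ (up to the $\F_n$-measurable weight). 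Applying the Topchii--Vatutin inequality for $1<p\leq 2$ conditionally on $\F_n$ gives, with a constant $C_p$,
\begin{equation*}
\me\bigl[\,|Z_{n+1}(\lambda)-Z_n(\lambda)|^p\mid\F_n\bigr]\leq C_p\,\me|Z_1(\lambda)-1|^p\,\frac{1}{|m(\lambda)|^{np}}\sum_{|u|=n}e^{-p\theta S(u)}=C_p\,\me|Z_1(\lambda)-1|^p\,\Bigl(\frac{m(p\theta)}{|m(\lambda)|^p}\Bigr)^{n}Z_n(p\theta).
\end{equation*}

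Next I would take expectations and use that $(Z_n(p\theta))_{n\in\mn_0}$ is a nonnegative unit-mean martingale, hence $\me Z_n(p\theta)=1$, to obtain
\begin{equation*}
\me|Z_{n+1}(\lambda)-Z_n(\lambda)|^p\leq C_p\,\me|Z_1(\lambda)-1|^p\,\Bigl(\frac{m(p\theta)}{|m(\lambda)|^p}\Bigr)^{n}.
\end{equation*}
Since $r\in[p,2]$ and $m(r\theta)/|m(\lambda)|^r<1$, a convexity/interpolation argument on $\log m(\cdot)$ (more precisely, the Lyapunov-type inequality $|m(\lambda)|\leq m(\theta)$ combined with Hölder applied to the defining integral, which yields that $s\mapsto \log\bigl(m(s\theta)/|m(\lambda)|^{s}\bigr)$ is convex in $s$ and hence, being nonpositive at $s=0$ and at $s=r$, is strictly negative on $(0,r)$, in particular at $s=p$) shows $q:=m(p\theta)/|m(\lambda)|^p<1$; likewise $\me|Z_1(\lambda)-1|^p<\infty$ follows from $\me|Z_1(\lambda)|^r<\infty$ by Jensen since $p\leq r$. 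Therefore $\sum_{n\geq 0}\me|Z_{n+1}(\lambda)-Z_n(\lambda)|^p\leq C_p\,\me|Z_1(\lambda)-1|^p\sum_{n\geq 0}q^{n}<\infty$, so the telescoping series $\sum_n (Z_{n+1}(\lambda)-Z_n(\lambda))$ converges absolutely in $L^p$; consequently $(Z_n(\lambda))_{n\in\mn_0}$ is an $L^p$-bounded martingale difference sum, whence it converges in $L^p$ (and a.s.) to some $Z_\infty(\lambda)$.

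The main obstacle is the passage from the summability of $\me|Z_{n+1}(\lambda)-Z_n(\lambda)|^p$ to genuine $L^p$-convergence: for $p\in(1,2)$ one cannot simply sum $L^p$-norms of martingale increments via orthogonality as in the $L^2$ case. The clean way around this is precisely the absolute-convergence argument above --- $\|Z_{n+1}(\lambda)-Z_n(\lambda)\|_p\leq (C_p\,\me|Z_1(\lambda)-1|^p)^{1/p}q^{n/p}$ is geometrically small, so $\sum_n \|Z_{n+1}(\lambda)-Z_n(\lambda)\|_p<\infty$ and the partial sums form a Cauchy sequence in $L^p$. The one point requiring a little care is verifying the strict inequality $q<1$ from the hypothesis at exponent $r$ rather than at $p$; this is handled by the convexity of $s\mapsto\log\bigl(m(s\theta)/|m(\lambda)|^{s}\bigr)$ on $[0,r]$, using $m(0)>1$ (supercriticality) at $s=0$ when $\theta=0$, and the definition of $\D$ to ensure $m(p\theta)<\infty$. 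With $q<1$ in hand the rest is the routine estimate displayed above.
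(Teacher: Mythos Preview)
Your argument has a genuine gap in the step where you deduce $q := m(p\theta)/|m(\lambda)|^p < 1$ from the hypothesis $m(r\theta)/|m(\lambda)|^r < 1$. The convexity claim is correct: the map $f(s)=\log m(s\theta) - s\log|m(\lambda)|$ is convex. But $f$ is \emph{not} nonpositive at $s=0$; indeed $f(0)=\log m(0)>0$ by supercriticality (and $f(0)=+\infty$ if $m(0)=\infty$). More to the point, $f(1)=\log\bigl(m(\theta)/|m(\lambda)|\bigr)>0$ by the standing assumption $|m(\lambda)|<m(\theta)$. So all you know is that the convex function $f$ is positive at $s=1$ and negative at $s=r$; this yields nothing at an intermediate $s=p\in(1,r)$. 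Concretely, take $\theta=0$, $m(0)=4$, $|m(\lambda)|=2.1$, $r=2$, $p=3/2$: then $m(0)/|m(\lambda)|^2=4/4.41<1$, yet $m(0)/|m(\lambda)|^{3/2}\approx 4/3.04>1$, so $q>1$ and your geometric series diverges.

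The remedy is not to pass through the exponent $p$ at all. Since $p\leq r$, it suffices to prove $L^r$-convergence, and your own von Bahr--Esseen estimate applied at exponent $r\in(1,2]$ gives
\[
\me|Z_{n+1}(\lambda)-Z_n(\lambda)|^r \leq C_r\,\me|Z_1(\lambda)-1|^r\,\Bigl(\frac{m(r\theta)}{|m(\lambda)|^r}\Bigr)^{n},
\]
summable directly by hypothesis; then $\|\cdot\|_p\leq\|\cdot\|_r$ finishes. This is essentially what the paper does: it controls the Burkholder square function $R=\sum_n|Z_{n+1}(\lambda)-Z_n(\lambda)|^2$, uses subadditivity of $x\mapsto x^{r/2}$ to pass from squares to $r$th powers, and then Jensen for the concave map $x\mapsto x^{p/r}$, arriving at the geometric bound $\bigl(m(r\theta)/|m(\lambda)|^r\bigr)^{np/r}$ without ever invoking the ratio at exponent $p$.
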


Now we formulate a criterion for the $L^p$-convergence, $p\geq 2$.
In the sequel we use  the standard notation
$$x\vee y=\max(x,y)   \quad \mbox{  and }  \quad   x\wedge
y=\min(x,y).$$
\begin{thm}\label{main2}
Let $p\geq 2$, $\theta,\gamma\in\mr$, $\gamma\neq 0$,
$\lambda=\theta+{\rm i}\gamma$ and
$0<|m(\lambda)|<m(\theta)<\infty$.   If $\theta\neq 0$, the
martingale $(Z_n(\lambda))_{n\in\mn_0}$ converges in $L^p$ if, and
only if,
\begin{equation}\label{mom_Z}
\me |Z_1(\lambda)|^p<\infty,
\end{equation}
\begin{equation}\label{mom_m}
\frac{m(2\theta)}{|m(\lambda)|^2}\vee\frac{m(p\theta)}{|m(\lambda)|^p}<1
\end{equation}
and, when $p>2$,
\begin{equation}\label{mom_Ztheta}
\me [Z_1(2\theta)]^{p/2}<\infty.
\end{equation}
If $\theta=0$, the martingale $(Z_n(\lambda))_{n\in\mn_0}$
converges in $L^p$ if, and only if, conditions \eqref{mom_Z} and
\eqref{mom_Ztheta} hold, and $$\frac{m(0)}{|m(\lambda)|^2}<1.$$
\end{thm}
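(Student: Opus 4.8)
The plan is to reduce the problem to verifying $\sup_n\E|Z_n(\lambda)|^p<\infty$, which is equivalent to $L^p$-convergence because $p>1$ (an $L^p$-bounded martingale converges in $L^p$, and since $(|Z_n(\lambda)|^p)_n$ is a submartingale whenever integrable, $\sup_n\E|Z_n(\lambda)|^p<\infty$ forces $Z_n(\lambda)\in L^p$ for every $n$). Writing the martingale differences as $D_{n+1}:=Z_{n+1}(\lambda)-Z_n(\lambda)=\sum_{|u|=n}m(\lambda)^{-n}e^{-\lambda S(u)}(Z_1^{(u)}(\lambda)-1)$, where, given $\F_n$, the $Z_1^{(u)}(\lambda)$, $|u|=n$, are i.i.d.\ copies of $Z_1(\lambda)$, one has $\E[|D_{n+1}|^2\mid\F_n]=\E|Z_1(\lambda)-1|^2\,V_n$ with $V_n:=|m(\lambda)|^{-2n}\sum_{|u|=n}e^{-2\theta S(u)}$, so $\sum_{k=1}^n\E[|D_k|^2\mid\F_{k-1}]=\E|Z_1(\lambda)-1|^2\,\widehat T_n$, where $\widehat T_n:=\sum_{j=0}^{n-1}V_j\uparrow\widehat T_\infty:=\sum_{j\ge0}V_j$. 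Splitting the series for $\widehat T_\infty$ over the first generation shows that $\widehat T_\infty$ solves the inhomogeneous smoothing equation $\widehat T_\infty\od 1+\sum_{i=1}^J|m(\lambda)|^{-2}e^{-2\theta X_i}\widehat T_\infty^{(i)}$, the $\widehat T_\infty^{(i)}$ being i.i.d.\ copies of $\widehat T_\infty$ independent of $(J,X_1,X_2,\dots)$, with mean multiplier $m(2\theta)/|m(\lambda)|^2$. Throughout I assume $Z_1(\lambda)$ is non-degenerate, so that $\E|Z_1(\lambda)-1|^2$ and $\E|Z_1(\lambda)-1|^p$ lie in $(0,\infty]$; the degenerate case is trivial.

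The key ingredient is a moment estimate for $\widehat T_\infty$: for $s\ge1$, $\E[\widehat T_\infty^{\,s}]<\infty$ if and only if $m(2s\theta)/|m(\lambda)|^{2s}<1$ and $\E[(\sum_i e^{-2\theta X_i})^s]<\infty$; the case $s=1$ (finiteness of the mean) already requires $m(2\theta)/|m(\lambda)|^2<1$, and then $\widehat T_\infty$ is the unique finite-mean fixed point. I would take this from the standard theory of moments of solutions of (inhomogeneous) smoothing equations. Applying it with $s=p/2$, and using $\sum_i e^{-2\theta X_i}=m(2\theta)Z_1(2\theta)$ and $m(2s\theta)=m(p\theta)$, the hypotheses \eqref{mom_m} and (when $p>2$) \eqref{mom_Ztheta} yield $\E[\widehat T_\infty^{\,p/2}]<\infty$; for $p=2$ this reduces to the single inequality in \eqref{mom_m}.

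For sufficiency I would apply the Burkholder--Rosenthal inequality to the real and imaginary parts of $(Z_n(\lambda))$, obtaining $\sup_n\E|Z_n(\lambda)-1|^p\lesssim_p(\E|Z_1(\lambda)-1|^2)^{p/2}\sup_n\E[\widehat T_n^{\,p/2}]+\sup_n\sum_{k=1}^n\E|D_k|^p$; the first term equals $(\E|Z_1(\lambda)-1|^2)^{p/2}\E[\widehat T_\infty^{\,p/2}]<\infty$. For the second, the conditional Rosenthal inequality gives $\E[|D_k|^p\mid\F_{k-1}]\lesssim_p(\E|Z_1(\lambda)-1|^2)^{p/2}V_{k-1}^{\,p/2}+\E|Z_1(\lambda)-1|^p\,|m(\lambda)|^{-p(k-1)}\sum_{|u|=k-1}e^{-p\theta S(u)}$; summing over $k$ and using the superadditivity $\sum_j V_j^{\,p/2}\le(\sum_j V_j)^{p/2}=\widehat T_\infty^{\,p/2}$ for the first part, and $\E[|m(\lambda)|^{-pj}\sum_{|u|=j}e^{-p\theta S(u)}]=(m(p\theta)/|m(\lambda)|^p)^j$ together with $m(p\theta)/|m(\lambda)|^p<1$ for the second, both sums stay bounded (here \eqref{mom_Z} supplies $\E|Z_1(\lambda)-1|^p<\infty$, hence also $\E|Z_1(\lambda)-1|^2<\infty$). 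This gives $\sup_n\E|Z_n(\lambda)|^p<\infty$. When $p=2$ the argument is elementary, since $\E|Z_n(\lambda)|^2=1+\E|Z_1(\lambda)-1|^2\sum_{j=0}^{n-1}(m(2\theta)/|m(\lambda)|^2)^j$, which is bounded precisely when \eqref{mom_Z} and \eqref{mom_m} hold.

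For necessity, assume $\sup_n\E|Z_n(\lambda)|^p<\infty$; then each $Z_n(\lambda)$, hence each $D_n$, lies in $L^p$ — giving \eqref{mom_Z} — and $\E|D_n|^p\to0$. From $\E|D_2|^2=\E|Z_1(\lambda)-1|^2\,m(2\theta)/|m(\lambda)|^2<\infty$ and $\E|Z_1(\lambda)-1|^2>0$ one gets $m(2\theta)<\infty$, whence $\E|D_{n+1}|^2=\E|Z_1(\lambda)-1|^2(m(2\theta)/|m(\lambda)|^2)^n\to0$ forces $m(2\theta)/|m(\lambda)|^2<1$. For $p>2$, the lower bound in Rosenthal's inequality yields $\E[|D_{n+1}|^p\mid\F_n]\gtrsim_p\E|Z_1(\lambda)-1|^p\,|m(\lambda)|^{-pn}\sum_{|u|=n}e^{-p\theta S(u)}$, so taking expectations (and using $D_2\in L^p$, $\E|Z_1(\lambda)-1|^p>0$) forces first $m(p\theta)<\infty$ and then $m(p\theta)/|m(\lambda)|^p<1$, completing \eqref{mom_m}; moreover $\E[|D_{n+1}|^p\mid\F_n]\ge(\E[|D_{n+1}|^2\mid\F_n])^{p/2}=(\E|Z_1(\lambda)-1|^2)^{p/2}V_n^{\,p/2}$, and at $n=1$ this together with $D_2\in L^p$ gives $\E[Z_1(2\theta)^{p/2}]<\infty$, i.e.\ \eqref{mom_Ztheta}. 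The case $\theta=0$ runs identically once one notes that $m(2\theta)=m(p\theta)=m(0)$ (so the two inequalities of \eqref{mom_m} merge into $m(0)/|m(\lambda)|^2<1$, which forces $|m(\lambda)|>1$ and hence $m(0)/|m(\lambda)|^p<1$ as well), that $V_j=|m(\lambda)|^{-2j}\mm_j(\mr)$ with $\mm_j(\mr)$ the size of the underlying Galton--Watson process, and that $Z_1(0)=\mm_1(\mr)/m(0)$. The main obstacle is the moment lemma for $\widehat T_\infty$: although classical, its sharp form resists a naive iteration of the fixed-point equation, because the constant in the underlying Rosenthal/von Bahr--Esseen estimate exceeds $1$, so one needs a more delicate iteration (or a contraction argument in a suitable metric on distributions); lesser points are the two-sided (in particular the lower) Rosenthal inequality for weighted sums of i.i.d.\ centred complex random variables used in the necessity part, and disposing of the degenerate case $Z_1(\lambda)\equiv1$.
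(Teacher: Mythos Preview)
Your argument is correct and follows a genuinely different route from the paper's. The paper works entirely through the Burkholder square-function inequality (Lemma~\ref{burk}): for sufficiency it controls $\me R^{p/2}$ via the triangle inequality in $L^{p/2}$ and a convexity (weighted-average) bound that yields
\[
\me|Z_{n+1}(\lambda)-Z_n(\lambda)|^p\le C_p\,\me|Z_1(\lambda)-1|^p\Big(\frac{m(2\theta)}{|m(\lambda)|^2}\Big)^{np/2}\me[Z_n(2\theta)]^{p/2},
\]
and then handles the factor $\me[Z_n(2\theta)]^{p/2}$ directly, splitting into the two subcases $m(p\theta)<m(2\theta)^{p/2}$ (where Proposition~\ref{conv_Lp} gives boundedness) and $m(p\theta)\ge m(2\theta)^{p/2}$ (where Lemma~\ref{AIPR} gives the growth rate $O(n^c(m(p\theta)/m(2\theta)^{p/2})^n)$). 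You instead use the Rosenthal-type decomposition into a predictable-variation term $\E[\widehat T_\infty^{\,p/2}]$ and a jump term $\sum_k\E|D_k|^p$, applying Rosenthal once more conditionally inside the latter, and you package the variation term as a moment of the fixed point of an inhomogeneous smoothing equation. Both are valid; the paper's route is somewhat more self-contained because Lemma~\ref{AIPR} is a concrete cited estimate, whereas the sharp moment criterion you invoke for $\widehat T_\infty$ is, as you yourself flag, not immediate---indeed, proving $\E[\widehat T_\infty^{\,p/2}]<\infty$ under \eqref{mom_m} and \eqref{mom_Ztheta} via Minkowski on $\sum_j V_j$ brings one back to the growth of $\me[Z_j(2\theta)]^{p/2}$ and hence essentially to Lemma~\ref{AIPR}. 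Your framing is more structural (identifying $\widehat T_\infty$ as the natural object) and the two-tier Rosenthal argument may transfer more readily to related models; the necessity arguments in the two proofs are essentially parallel, both resting on superadditivity of $x\mapsto x^{p/2}$ and conditional Jensen applied to the inner sum.
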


\section{Proofs}\label{proofs}

We first formulate a version of the Burkholder inequality for complex-valued martingales. Although we think the result is known, we have not been able to locate it in the literature.
\begin{lemma}\label{burk}
Let $p>1$ and $(X_n)_{n\in\mn_0}$ be a complex-valued martingale
with $X_0=0$. Then the martingale $(X_n)_{n\in\mn_0}$ converges in
$L^p$ if, and only if, $\me (\sum_{n\geq
0}|X_{n+1}-X_n|^2)^{p/2}<\infty$. If one of these holds, then
\begin{equation}\label{burk_inequality}
c_p\me\Big(\sum_{n\geq 0}|X_{n+1}-X_n|^2\Big)^{p/2}\leq \me |X|^p\leq C_p\me \Big(\sum_{n\geq 0}|X_{n+1}-X_n|^2\Big)^{p/2}
\end{equation}
for appropriate positive and finite constants $c_p$ and $C_p$,
where $X$ is the $L^p$- limit of $(X_n)_{n\in\mn_0}$.
\end{lemma}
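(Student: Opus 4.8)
The plan is to reduce the complex-valued statement to the known real-valued Burkholder--Davis--Gundy (BDG) inequality by separating real and imaginary parts, and to handle the ``convergence iff square function is integrable'' equivalence via a truncation and martingale-convergence argument.

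\textbf{Step 1: from complex to real martingales.} Write $X_n = U_n + \iii V_n$ with $U_n = \operatorname{Re} X_n$ and $V_n = \operatorname{Im} X_n$; both $(U_n)$ and $(V_n)$ are real-valued martingales with $U_0 = V_0 = 0$, adapted to the same filtration. Since $|X_{n+1}-X_n|^2 = |U_{n+1}-U_n|^2 + |V_{n+1}-V_n|^2$, the quadratic variation splits as $\sum_{n\ge 0}|X_{n+1}-X_n|^2 = \sum_{n\ge 0}|U_{n+1}-U_n|^2 + \sum_{n\ge 0}|V_{n+1}-V_n|^2 =: [U]_\infty + [V]_\infty$. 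Because $\sup_n|X_n| \asymp \sup_n|U_n| \vee \sup_n|V_n|$ up to a factor of $2$, and $(a+b)^{p/2} \asymp_p a^{p/2}+b^{p/2}$ for $a,b\ge 0$, controlling $\me(\sup_n|X_n|)^p$ is equivalent (up to $p$-dependent constants) to controlling $\me(\sup_n|U_n|)^p + \me(\sup_n|V_n|)^p$, and likewise $\me([U]_\infty + [V]_\infty)^{p/2} \asymp_p \me[U]_\infty^{p/2} + \me[V]_\infty^{p/2}$.

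\textbf{Step 2: apply the real discrete-time BDG inequality.} For a real discrete-time martingale $(M_n)_{n\in\mn_0}$ with $M_0=0$ and any $p>1$, the Burkholder--Davis--Gundy inequality gives $c_p'\,\me[M]_n^{p/2} \le \me(\max_{0\le k\le n}|M_k|)^p \le C_p'\,\me[M]_n^{p/2}$ with $[M]_n = \sum_{k=0}^{n-1}|M_{k+1}-M_k|^2$. Applying this to $(U_{n\wedge N})$ and $(V_{n\wedge N})$ for each fixed $N$, then letting $N\to\infty$ via monotone convergence on the square-function side and Fatou / monotone convergence (using that $\max_{0\le k\le n}|M_{k\wedge N}|$ is nondecreasing in $N$ up to $\sup_k|M_k|$) on the maximal-function side, yields $c_p'\,\me[U]_\infty^{p/2} \le \me(\sup_n|U_n|)^p \le C_p'\,\me[U]_\infty^{p/2}$, and similarly for $V$. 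Recombining with Step 1 produces $\tilde c_p\,\me\big(\sum_{n\ge0}|X_{n+1}-X_n|^2\big)^{p/2} \le \me(\sup_n|X_n|)^p \le \tilde C_p\,\me\big(\sum_{n\ge0}|X_{n+1}-X_n|^2\big)^{p/2}$.

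\textbf{Step 3: upgrade the maximal bound to the $L^p$-convergence equivalence.} If $\me\big(\sum_{n\ge0}|X_{n+1}-X_n|^2\big)^{p/2}<\infty$, then by Step 2 $\me\sup_n|X_n|^p<\infty$, so $(X_n)$ is an $L^p$-bounded martingale ($p>1$), hence by Doob's $L^p$-convergence theorem it converges a.s.\ and in $L^p$ to some $X$; the dominated convergence theorem (domination by $\sup_n|X_n|^p \in L^1$) then gives $\me|X|^p = \lim_n \me|X_n|^p$, and one passes from the maximal bound to the claimed two-sided inequality \eqref{burk_inequality} by noting $|X| \le \sup_n|X_n|$ for the lower constant and, for the upper constant, using Doob's maximal inequality $\me\sup_n|X_n|^p \le (p/(p-1))^p\,\me|X|^p$ to re-express the left-hand BDG bound in terms of $|X|$ (absorbing the factor into $c_p$). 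Conversely, if $(X_n)$ converges in $L^p$ to $X$, then $(X_n)$ is $L^p$-bounded, Doob's inequality again gives $\me\sup_n|X_n|^p<\infty$, and the left-hand inequality of Step 2 forces $\me\big(\sum_{n\ge0}|X_{n+1}-X_n|^2\big)^{p/2}<\infty$.

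The routine parts are the constant-chasing in Steps 1 and 3; \textbf{the only genuine obstacle is making sure the real BDG inequality is being invoked in a form valid for all $p>1$ in discrete time} (the classical statement is often quoted for $p\ge 1$ with the Davis refinement at $p=1$, but here $p>1$ so the Burkholder form suffices) and that the $N\to\infty$ limits in Step 2 are justified on both sides simultaneously — this is where a careful monotone/Fatou argument is needed, since a priori neither side is known to be finite until the limit is taken.
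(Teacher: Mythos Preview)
Your proposal is correct and follows essentially the same approach as the paper: split $X_n$ into real and imaginary parts, apply the real-valued Burkholder inequality to each, and recombine using the elementary inequalities $(a+b)^{p/2}\asymp_p a^{p/2}+b^{p/2}$. The only cosmetic difference is that you route through the maximal function $\sup_n|X_n|$ and then invoke Doob's $L^p$ inequality to pass to $\me|X|^p$, whereas the paper cites the Chow--Teicher form of Burkholder's inequality directly in terms of $\me|X|^p$ and is content to say ``we only need to prove \eqref{burk_inequality}'' for the equivalence; your Step~3 simply makes explicit what the paper leaves implicit.
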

\begin{proof}
We only need to prove \eqref{burk_inequality}. According to Theorem 1 on
p.~414 in \cite{Chow+Teicher:1997} inequality
\eqref{burk_inequality} holds for real-valued martingales with
constants $c_p^\ast$ and $C_p^\ast$ in place of $c_p$ and $C_p$.
We shall deduce \eqref{burk_inequality} for complex-valued
martingales from the cited theorem and the fact that $({\rm
Re}\,X_n)_{n\in\mn_0}$ and $({\rm Im}\,X_n)_{n\in\mn_0}$ are
real-valued martingales. From the elementary inequalities
$$(2^{r-1} \wedge 1) (a^r +b^r)  \leq   (a+b)^r \leq (2^{r-1} \vee 1) (a^r +b^r),\quad a,b \geq 0,  \;  r>0$$
we obtain $$(2^{p/2-1}\wedge 1)(|{\rm Re}\,X|^p+|{\rm
Im}\,X|^p)\leq |X|^p\leq (2^{p/2-1}\vee 1)(|{\rm Re}\,X|^p+|{\rm
Im}\,X|^p).$$ Therefore,
\begin{eqnarray*}
&&\me |X|^p\\ &\leq& (2^{p/2-1}\vee 1)(\me |{\rm Re}\,X|^p+\me |{\rm Im}\,X|^p) \\
&\leq & (2^{p/2-1}\vee 1) C_p^\ast  \left(\me \left[  \Big(\sum_{n\geq 0}\big({\rm Re}\,(X_{n+1}-X_n)\big)^2\Big)^{p/2} \right]
                                                            + \me \left[ \Big(\sum_{n\geq 0}\big({\rm Im}\,(X_{n+1}-X_n)\big)^2\Big)^{p/2}  \right]  \right) \\
&\leq&  \frac{2^{p/2-1}\vee 1}{2^{p/2-1} \wedge 1}  \; C_p^\ast
\me \Big(\sum_{n\geq 0}\big|X_{n+1}-X_n\big|^2\Big)^{p/2}
\end{eqnarray*}
and
\begin{eqnarray*}
&&\me |X|^p\\&\geq& (2^{p/2-1}\wedge 1)(\me |{\rm Re}\,X|^p+\me |{\rm Im}\,X|^p)\\
&\geq& (2^{p/2-1}\wedge 1) c_p^\ast  \left( \me \left[ \Big(\sum_{n\geq 0}\big({\rm Re}\,(X_{n+1}-X_n)\big)^2\Big)^{p/2} \right]
                                                                   +\me \left[ \Big(\sum_{n\geq 0}\big({\rm Im}\,(X_{n+1}-X_n) \big)^2\Big)^{p/2} \right] \right) \\
&\geq& \frac{ 2^{p/2-1}\wedge 1} {2^{p/2-1}\vee 1}  \;   c_p^\ast  \me \Big(\sum_{n\geq 0}\big|X_{n+1}-X_n\big|^2\Big)^{p/2}.
\end{eqnarray*}
\end{proof}

In Lemma \ref{senhey} given next which is needed for the proof of Theorem \ref{main3} we use the notation introduced in the paragraph preceding Theorem \ref{main3}.
\begin{lemma}\label{senhey}
Let $r\in (0,1)$, $m(0)\in (1,\infty)$ and $\me
Z_1(0)\log^+Z_1(0)=\infty$. Then $\me
[\tilde{Z}_\infty(0)]^r<\infty$ and $$\me
[Z_n(0)]^r~\sim~\frac{\me
[\tilde{Z}_\infty(0)]^r}{\ell(m(0)^n)^r},\quad n\to\infty.$$
\end{lemma}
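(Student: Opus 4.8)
The plan is to combine the Seneta--Heyde convergence $Z_n(0)\ell(m(0)^n)\to\tilde Z_\infty(0)$ a.s.\ with a uniform integrability argument for the family $([Z_n(0)]^r)_{n\in\mn_0}$, so that we may pass to the limit inside the expectation. Since $r\in(0,1)$, the function $x\mapsto x^r$ is concave, and the key is to show that the sequence $([Z_n(0)]^r)_{n\in\mn_0}$ is \emph{not} merely bounded in $L^1$ but uniformly integrable when rescaled by $\ell(m(0)^n)^{-r}$, or more precisely that $[Z_n(0)\ell(m(0)^n)]^r$ is uniformly integrable. First I would recall the well-known fact (going back to the Seneta--Heyde theory for Galton--Watson and branching random walks; see the references in the excerpt) that under $m(0)<\infty$ one always has a.s.\ convergence $W_n:=Z_n(0)\ell(m(0)^n)\to\tilde Z_\infty(0)$ with $\tilde Z_\infty(0)$ positive on the survival set. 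Since $(Z_n(0))_{n\in\mn_0}$ is a nonnegative martingale of mean one, Fatou's lemma gives $\me[\tilde Z_\infty(0)]^r<\infty$ for every $r\in(0,1)$ (indeed by Jensen/concavity $\me[\tilde Z_\infty(0)]^r\le(\me\tilde Z_\infty(0))^r\le 1$), so the finiteness assertion is essentially free.

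The substantive part is the asymptotic equivalence $\me[Z_n(0)]^r\sim\me[\tilde Z_\infty(0)]^r\,\ell(m(0)^n)^{-r}$, equivalently $\me W_n^r\to\me\tilde Z_\infty^r$. The natural route is a truncation argument exploiting the recursive (branching) structure. Writing $Z_{n+1}(0)=m(0)^{-1}\sum_{|u|=1}Z_n^{(u)}(0)$ with $(Z_n^{(u)}(0))_u$ i.i.d.\ copies of $Z_n(0)$ independent of $\mm_1$, one obtains a subadditivity-type control: using $(\sum a_i)^r\le\sum a_i^r$ for $r\in(0,1)$ and nonnegative $a_i$, it follows that $\me[Z_{n+1}(0)]^r\le m(0)^{-r}\,\me J\cdot\me[Z_n(0)]^r\cdot\mathbf{1}_{\{\me J<\infty\}}+\dots$ — but since $J$ may be infinite we instead work directly with $W_n$ and the slowly varying normalization, where the correct inequality is $\me W_{n+1}^r\ge(\text{something})\,\me W_n^r$ giving monotonicity-type bounds. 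Concretely, I would: (i) establish an a priori bound $\sup_n\me W_n^r<\infty$; (ii) show uniform integrability of $(W_n^r)_{n\in\mn_0}$, for which the cleanest tool is to prove $\sup_n\me W_n^{r'}<\infty$ for some $r'\in(r,1)$, since boundedness in $L^{r'/r}$ of $W_n^r$ forces uniform integrability. Step (ii) reduces Lemma \ref{senhey} to step (i) applied with the larger exponent $r'$, so the whole lemma follows once we have: for every $s\in(0,1)$, $\sup_n\me[Z_n(0)]^s\,\ell(m(0)^n)^s<\infty$.

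To get that uniform bound I would use the recursion together with properties of slow variation. Iterating $Z_{n}(0)=m(0)^{-1}\sum_{|u|=1}Z^{(u)}_{n-1}(0)$ and the elementary inequality for concave powers, one shows $\ell(m(0)^n)^s\,\me[Z_n(0)]^s$ is, up to the slowly varying correction $\ell(m(0)^n)^s/\ell(m(0)^{n-1})^s\to1$, bounded by a constant times itself at step $n-1$ plus a lower-order term; a standard argument (e.g.\ comparing with the solution of the associated linear recursion, or invoking the Seneta--Heyde renewal-type estimate directly) then yields boundedness. The main obstacle I anticipate is precisely controlling the interplay between the slowly varying function $\ell$ and the branching recursion when $J$ can be infinite and $\me Z_1(0)\log^+Z_1(0)=\infty$ (the regime where $\ell$ is genuinely non-trivial): one cannot use $L^1$ estimates for $Z_1(0)$ beyond mean one, so all moment bounds must be at exponents $<1$, and the uniform integrability in step (ii) must be extracted without any $\me Z_1(0)\log^+Z_1(0)$-type control. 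I expect this is handled by citing or reproving a Seneta--Heyde-type lemma asserting $\me[Z_n(0)]^s\asymp\ell(m(0)^n)^{-s}$ for $s\in(0,1)$ — which is the technical heart — after which the $L^1$-convergence $W_n^r\to\tilde Z_\infty^r$, hence the stated equivalence, follows from a.s.\ convergence plus uniform integrability.
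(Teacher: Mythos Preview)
Your plan has two genuine gaps.

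First, the finiteness argument is wrong. You write that Jensen's inequality gives $\me[\tilde Z_\infty(0)]^r\le(\me\tilde Z_\infty(0))^r\le 1$, tacitly assuming $\me\tilde Z_\infty(0)\le 1$. But $\tilde Z_\infty(0)$ is \emph{not} the a.s.\ limit of the martingale $Z_n(0)$ (that limit is $0$ under $\me Z_1(0)\log^+Z_1(0)=\infty$); it is the limit of the rescaled sequence $W_n=Z_n(0)\ell(m(0)^n)$, whose means $\me W_n=\ell(m(0)^n)\to\infty$. In fact the integrated tail $x\mapsto\int_0^x\mmp\{\tilde Z_\infty(0)>y\}\,{\rm d}y$ is slowly varying (this is what the paper cites from Asmussen--Hering), so typically $\me\tilde Z_\infty(0)=\infty$. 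Finiteness of $\me[\tilde Z_\infty(0)]^r$ for $r<1$ follows from that slow-variation statement, not from any martingale mean bound.

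Second, and more seriously, your route to the a~priori bound $\sup_n\me W_n^s<\infty$ via the branching recursion does not close. Subadditivity of $x\mapsto x^s$ gives
\[
\me[Z_n(0)]^s\le m(0)^{-s}\,\me J\cdot\me[Z_{n-1}(0)]^s=m(0)^{1-s}\,\me[Z_{n-1}(0)]^s,
\]
and since $m(0)^{1-s}>1$ this yields geometric growth, not the required $\ell(m(0)^n)^{-s}$ decay. You acknowledge the difficulty but offer no mechanism to beat this; ``cite or reprove a Seneta--Heyde-type lemma asserting $\me[Z_n(0)]^s\asymp\ell(m(0)^n)^{-s}$'' is precisely the statement to be proved. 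Without that bound, the uniform-integrability step (ii) has no input.

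The paper's proof avoids both issues by passing to Laplace transforms. It uses the identity
\[
\me X^r=\frac{r}{\Gamma(1-r)}\int_0^\infty s^{-r-1}\bigl(1-\me e^{-sX}\bigr)\,{\rm d}s,\qquad r\in(0,1),
\]
and the fact, coming from the Seneta--Heyde construction, that with the normalization $h_n(s_0)\sim m(0)^{-n}\ell(m(0)^n)$ the sequence $\exp(-h_n(s_0)\mm_n(\mr))$ is a martingale converging in $L^1$ to $\exp(-\tilde Z_\infty(0))$. Concavity of $x\mapsto x^s$ on $[0,1]$ for $s\in(0,1)$ then yields the uniform domination $1-\me e^{-sh_n(s_0)\mm_n(\mr)}\le 1-\me e^{-s\tilde Z_\infty(0)}$, which is integrable against $s^{-r-1}$ near $0$ because $\me[\tilde Z_\infty(0)]^r<\infty$. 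Dominated convergence in the Laplace integral gives $\me[h_n(s_0)\mm_n(\mr)]^r\to\me[\tilde Z_\infty(0)]^r$ directly, with no separate uniform-integrability argument needed.
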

\begin{proof}
By Corollary 5.5 on p.~86 in \cite{Asmussen+Hering:1983}, the
function $x\mapsto \int_0^x \mmp\{\tilde{Z}_\infty(0)>y\}{\rm d}y$
slowly varies at $\infty$. This entails $\me
[\tilde{Z}_\infty(0)]^r<\infty$.

From Theorem 5.1 on p.~83 in \cite{Asmussen+Hering:1983} (and its
proof) and Corollary 5.3 on p.~85 in \cite{Asmussen+Hering:1983}
we know that $m(0)^{-n}\ell(m(0)^n)\sim h_n(s_0)$ as $n\to\infty$,
where $h_n(s)$ is the inverse function of $x\mapsto -\log\me
e^{-x\mm_n(\mr)}$ for $n\in\mn$ and $s_0$ is a small enough
positive number, and that $(\exp(-h_n(s_0)\mm_n(\mr)))_{n\in\mn}$
is a martingale with respect to the natural filtration which
converges a.s.\ and in mean as $n\to\infty$ to
$\exp(-\tilde{Z}_\infty(0))$. The first of these facts tells us
that it suffices to prove that
\begin{equation}\label{inter4}
\lim_{n\to\infty}\,\me [h_n(s_0)\mm_n(\mr)]^r=\me [\tilde{Z}_\infty(0)]^r.
\end{equation}
As a consequence of the second we infer that, for each $s\in (0,1)$, $(\exp(-sh_n(s_0)\mm_n(\mr)))_{n\in\mn}$ is a submartingale. In particular,
\begin{equation}\label{inter5}
1-\me e^{-s h_n(s_0)\mm_n(\mr)} \leq 1-\me e^{-s\tilde{Z}_\infty(0)},\quad s\in (0,1).
\end{equation}

To prove \eqref{inter4} we shall use the following formula which holds for any nonnegative random variable $X$ and $a\in (0,1)$:
\begin{equation}\label{moments}
\me X^a=\frac{a}{\Gamma(1-a)}\int_0^\infty s^{-a-1}(1-\me
e^{-sX}){\rm d}s,
\end{equation}
where $\Gamma(\cdot)$ is the gamma function. This equality follows
from $\me e^{-sX}=\mmp\{R >sX \}$ for $s\geq 0$, where $R$ is an
exponentially distributed random variable of unit mean which is
independent of $X$.

With the help of $\lim_{n\to\infty}\,\me
e^{-sh_n(s_0)\mm_n(\mr)}=\me e^{-s\tilde{Z}_\infty(0)}$ for all
$s\geq 0$, inequality \eqref{inter5} and  the fact that $1-\me
e^{-s h_n(s_0)\mm_n(\mr)} \leq 1$ for $s\geq 1,$ we obtain
\begin{eqnarray*}
\me [h_n(s_0)\mm_n(\mr)]^r&=&\frac{r}{\Gamma(1-r)}\int_0^\infty s^{-r-1}(1-\me e^{-sh_n(s_0)\mm_n(\mr)}){\rm d}s\\&\to&\frac{r}{\Gamma(1-r)}
\int_0^\infty s^{-r-1}(1-\me e^{-s\tilde{Z}_\infty(0)}){\rm d}s= \me [\tilde{Z}_\infty(0)]^r
\end{eqnarray*}
as $n\to\infty$ by Lebesgue's dominated convergence theorem.
\end{proof}

For any $u\in\V$ and $\lambda\in\mathcal{D}_{\neq 0}$, set
$$Z_1^{(u)}(\lambda):=\frac{1}{m(\lambda)} \sum_{|v|=1}e^{-\lambda
(S(uv)-S(u)}   \quad \mbox{ and } \quad  Y_u(\lambda):= \frac{e^{-\lambda
S(u)}}{m(\lambda)^{|u|}}.$$
Thus, $Z_1^{(u)}(\lambda)$ is the
analogue of $Z_1(\lambda)$, but based on the progeny of individual
$u$ rather than the progeny of the initial ancestor $\varnothing$.
Observe that, for the individuals $u$ with $|u|= n$ for some
$n\in\mn$, the $Y_u$ are $\mathcal{F}_n$--measurable, whereas the
$Z_1^{(u)}(\lambda)$ are independent of $\mathcal{F}_n$.
\begin{lemma}\label{mom_est}
Let $p\in (1,2)$, $\gamma\in\mr\backslash\{0\}$,
$\lambda=\theta+{\rm i}\gamma$ and
$0<|m(\lambda)|<m(\theta)<\infty$. Assume that \eqref{imp1} holds
for $\alpha\in (p,2)$ and, when $\theta\neq 0$, that
$m(\alpha\theta)<\infty$ and the martingale
$(Z_n(\alpha\theta))_{n\in\mn_0}$ is uniformly integrable. Then
there exist positive constants $c$ and $C$ such that for each
$n\in\mn$,
\begin{equation}\label{import}
c\Big(\frac{m(0)}{|m(\lambda)|^\alpha}\Big)^{np/\alpha}\me
[Z_n(0)]^{p/\alpha}\leq \me \Big(\sum_{|u|=n}|Y_u(\lambda)|^2
|Z_1^{(u)}(\lambda)-1|^2\Big)^{p/2}\leq
C\Big(\frac{m(0)}{|m(\lambda)|^\alpha}\Big)^{np/\alpha}\me
[Z_n(0)]^{p/\alpha}
\end{equation}
when $\theta=0$, and
\begin{equation}\label{import1}
c\Big(\frac{m(\alpha\theta)}{|m(\lambda)|^\alpha}\Big)^{np/\alpha}\leq
\me \Big(\sum_{|u|=n}|Y_u(\lambda)|^2
|Z_1^{(u)}(\lambda)-1|^2\Big)^{p/2}\leq
C\Big(\frac{m(\alpha\theta)}{|m(\lambda)|^\alpha}\Big)^{np/\alpha}
\end{equation}
when $\theta\neq 0$.
\end{lemma}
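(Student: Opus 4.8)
The plan is to condition on $\F_n$ and reduce both \eqref{import} and \eqref{import1} to a deterministic moment estimate for weighted sums of i.i.d.\ summands with a regularly varying tail. Write $a_u:=|Y_u(\lambda)|^2=e^{-2\theta S(u)}|m(\lambda)|^{-2n}$ for $|u|=n$; these are $\F_n$-measurable, whereas the random variables $\xi_u:=|Z_1^{(u)}(\lambda)-1|^2$, $|u|=n$, are i.i.d.\ copies of $\xi:=|Z_1(\lambda)-1|^2$ and independent of $\F_n$. Since a shift by a constant does not affect regular variation, condition \eqref{imp1} entails $0<\liminf_{x\to\infty}x^{\alpha/2}\mmp\{\xi>x\}\le\limsup_{x\to\infty}x^{\alpha/2}\mmp\{\xi>x\}<\infty$; put $\beta:=\alpha/2$ and $q:=p/2$, so that $0<q<\beta<1$ by the hypothesis $p<\alpha<2$.

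The crucial ingredient, which I would establish separately, is the following: if $(\eta_i)$ are i.i.d.\ nonnegative with $\mmp\{\eta_1>x\}\asymp x^{-\beta}$ for some $\beta\in(0,1)$ and $q\in(0,\beta)$, then there are constants $0<c_0\le C_0<\infty$ depending only on the law of $\eta_1$ and on $q$ such that $c_0\big(\sum_i b_i^\beta\big)^{q/\beta}\le\me\big(\sum_i b_i\eta_i\big)^q\le C_0\big(\sum_i b_i^\beta\big)^{q/\beta}$ for every finite or countable family of nonnegative weights $(b_i)$ with $\sum_i b_i^\beta<\infty$. After normalizing $\sum_i b_i^\beta=1$ (so $b_i\le1$ for all $i$), the upper bound follows by splitting $b_i\eta_i=b_i\eta_i\1_{\{b_i\eta_i\le1\}}+b_i\eta_i\1_{\{b_i\eta_i>1\}}$: subadditivity of $t\mapsto t^q$ bounds the contribution of the large parts by $\sum_i b_i^q\me[\eta^q\1_{\{\eta>1/b_i\}}]\asymp\sum_i b_i^q(1/b_i)^{q-\beta}=\sum_i b_i^\beta$, while Jensen's inequality ($q<1$) bounds the bulk by $\big(\sum_i b_i\me[\eta\1_{\{\eta\le1/b_i\}}]\big)^q\asymp\big(\sum_i b_i(1/b_i)^{1-\beta}\big)^q=\big(\sum_i b_i^\beta\big)^q$, using the elementary estimates $\me[\eta^q\1_{\{\eta>s\}}]\asymp s^{q-\beta}$ and $\me[\eta\1_{\{\eta\le s\}}]\asymp s^{1-\beta}$ valid for $s\ge1$ when $\mmp\{\eta>x\}\asymp x^{-\beta}$ with $0<q<\beta<1$. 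The lower bound follows from $\me\big(\sum_i b_i\eta_i\big)^q\ge\me\big(\sup_i b_i\eta_i\big)^q$ together with $1-\prod_i(1-\mmp\{b_i\eta_i>x\})\gtrsim\min\big\{\sum_i\mmp\{b_i\eta_i>x\},1\big\}$ and $\sum_i\mmp\{b_i\eta_i>x\}\asymp x^{-\beta}\sum_i b_i^\beta$ for large $x$, which gives $\mmp\{\sup_i b_i\eta_i>x\}\gtrsim x^{-\beta}$ and hence $\me\big(\sup_i b_i\eta_i\big)^q\gtrsim1$.

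Applying this estimate conditionally on $\F_n$ with $\eta_i=\xi_u$, $b_i=a_u=|Y_u(\lambda)|^2$, $q=p/2$ and $\beta=\alpha/2$ — noting that $\sum_{|u|=n}|Y_u(\lambda)|^\alpha<\infty$ a.s.\ because $\me Z_n(\alpha\theta)=1<\infty$ — yields $\me\big[\big(\sum_{|u|=n}|Y_u(\lambda)|^2|Z_1^{(u)}(\lambda)-1|^2\big)^{p/2}\,\big|\,\F_n\big]\asymp\big(\sum_{|u|=n}|Y_u(\lambda)|^\alpha\big)^{p/\alpha}$. Now $\sum_{|u|=n}|Y_u(\lambda)|^\alpha=|m(\lambda)|^{-n\alpha}\sum_{|u|=n}e^{-\alpha\theta S(u)}=\big(m(\alpha\theta)/|m(\lambda)|^\alpha\big)^nZ_n(\alpha\theta)$, and when $\theta=0$ this equals $\big(m(0)/|m(\lambda)|^\alpha\big)^nZ_n(0)$ because $Z_n(0)=m(0)^{-n}\mm_n(\mr)$. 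Taking expectations proves \eqref{import} directly, and reduces \eqref{import1} to showing that $\me[Z_n(\alpha\theta)]^{p/\alpha}$ is bounded away from $0$ and $\infty$ uniformly in $n\in\mn_0$.

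The last point is where the uniform integrability hypothesis enters (and is the only place it is used). The upper bound $\me[Z_n(\alpha\theta)]^{p/\alpha}\le(\me Z_n(\alpha\theta))^{p/\alpha}=1$ is immediate from Jensen's inequality, since $p/\alpha<1$ and $(Z_n(\alpha\theta))_{n\in\mn_0}$ is a unit-mean martingale. For the lower bound, uniform integrability ensures that $Z_n(\alpha\theta)\to Z_\infty(\alpha\theta)$ a.s.\ with $\me Z_\infty(\alpha\theta)=1$, so $\me[Z_\infty(\alpha\theta)]^{p/\alpha}>0$; Fatou's lemma then gives $\liminf_{n\to\infty}\me[Z_n(\alpha\theta)]^{p/\alpha}\ge\me[Z_\infty(\alpha\theta)]^{p/\alpha}>0$, and since each $\me[Z_n(\alpha\theta)]^{p/\alpha}>0$ (again because $\me Z_n(\alpha\theta)=1$) we conclude $\inf_{n\in\mn_0}\me[Z_n(\alpha\theta)]^{p/\alpha}>0$, which yields \eqref{import1}. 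I expect the main obstacle to be the deterministic estimate of the second paragraph: one has to choose the truncation level correctly and control the ``single big jump'' contribution so that the constants $c_0,C_0$ are uniform in the number and in the magnitudes of the weights; everything else is conditioning and bookkeeping.
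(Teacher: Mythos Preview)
Your argument is correct and takes a genuinely different route from the paper. You prove a uniform deterministic estimate $\me\big(\sum_i b_i\eta_i\big)^q\asymp\big(\sum_i b_i^\beta\big)^{q/\beta}$ for i.i.d.\ nonnegative $\eta_i$ with $\mmp\{\eta_1>x\}\asymp x^{-\beta}$ and $0<q<\beta<1$, via truncation (upper bound) and a single-big-jump argument (lower bound), and then apply it conditionally on $\F_n$ to the random weights $|Y_u(\lambda)|^2$; this treats $\theta=0$ and $\theta\neq 0$ in one stroke, and reduces \eqref{import1} to the elementary observation that $\me[Z_n(\alpha\theta)]^{p/\alpha}$ is bounded between $\me[Z_\infty(\alpha\theta)]^{p/\alpha}>0$ (Fatou) and $1$ (Jensen). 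The paper instead proceeds by limit theorems: for $\theta=0$ the weights are all equal to $|m(\lambda)|^{-2n}$, and it shows $\me\big(k^{-2/\alpha}\sum_{j\le k}\xi_j^2\big)^{p/2}\to\me[\eta_{\alpha/2}]^{p/2}$ via the stable limit together with the moment representation \eqref{moments} and dominated convergence; for $\theta\neq 0$ it invokes Biggins' result $\max_{|u|=n}e^{-\alpha\theta S(u)}/m(\alpha\theta)^n\to 0$ a.s.\ to obtain a null-array condition, proves convergence of conditional Laplace transforms, and again deduces moment convergence to $\me\big[\eta_{\alpha/2}Z_\infty(\alpha\theta)^{2/\alpha}\big]^{p/2}$. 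Your approach is more elementary---no stable laws, no appeal to \cite{Biggins:1998}---while the paper's yields exact limits rather than two-sided bounds, extra information not needed for the lemma itself. Your sketch of the deterministic estimate is sound; the only care required is that the constants in $\me[\eta^q\1_{\{\eta>s\}}]\asymp s^{q-\beta}$ and $\me[\eta\1_{\{\eta\le s\}}]\le C\,s^{1-\beta}$ be uniform for $s\ge 1$, which is exactly what the normalization $\sum_i b_i^\beta=1$ (hence $b_i\le 1$, so $1/b_i\ge 1$) guarantees.
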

\begin{proof}
Denote by $\xi_1,\xi_2, \ldots$ independent random variables which
are distributed as $|Z_1(\lambda)-1|$ and independent of
$\mathcal{F}_n$. Further, let $\eta_1$, $\eta_2,\ldots$ be i.i.d.\
positive random variables with
$$\mmp\{\eta_1>x\}~\sim~ bx^{-\alpha},\quad x\to\infty$$ for some
$b>0$ and the same $\alpha$ as in \eqref{imp1}. It is clear that
$\varphi(s):=\me e^{-s\eta_1^2}$ satisfies
\begin{equation}\label{near zero}
-\log \varphi(s)~ \sim~ b\Gamma(1-\alpha/2)s^{\alpha/2},\quad s\to
0+.
\end{equation}
Let $\eta_{\alpha/2}$ be a positive $(\alpha/2)$-stable random
variable with the Laplace transform
$$\Psi(s):=\me \exp(-s\eta_{\alpha/2})=\exp(-b\Gamma(1-\alpha/2)s^{\alpha/2}),\quad
s\geq 0.$$

\noindent {\sc\underline{Case} $\theta=0$}. Set
$$\Psi_k(s):=\me \exp\bigg(\frac{\eta_1^2+\ldots+\eta_k^2}{k^{2/\alpha}}\bigg)=
[\varphi(sk^{-2/\alpha})]^k, \quad s\geq 0,~ k\in\mn.   $$ It is
easily seen that
\begin{equation}\label{conv}
\lim_{k\to\infty}\Psi_k(s)=\Psi(s),\quad s\geq 0.
\end{equation}
We intend to show that
\begin{equation}\label{inter17}
\lim_{k\to\infty}\me\Bigg(\frac{\eta_1^2+\ldots+\eta_k^2}{k^{2/\alpha}}\Bigg)^{p/2}=\me
[\eta_{\alpha/2}]^{p/2}<\infty.
\end{equation}
According to formula \eqref{moments} relation \eqref{inter17} is
equivalent to
\begin{equation}\label{limit3}
\lim_{k\to\infty}\int_0^\infty s^{-p/2-1}(1-\Psi_k(s)){\rm
d}s=\int_0^\infty s^{-p/2-1}(1-\Psi(s)){\rm d}s.
\end{equation}
With \eqref{conv} at hand we shall prove \eqref{limit3} with the
help of Lebesgue's dominated convergence theorem. In view of
\eqref{near zero}, for $s_0>0$ small enough there exists $r>0$
such that $-\log \varphi(s)\leq rs^{\alpha/2}$ whenever $s\in
[0,s_0]$. Hence, for such $s$
$$1-\Psi_k(s)\leq -\log \Psi_k(s)\leq rs^{\alpha/2},$$ and
$\int_0^{s_0}s^{-p/2-1+\alpha/2}{\rm
d}s=\frac{2}{\alpha-p}s_0^{(\alpha-p)/2}<\infty$ because
$p<\alpha$. For $s\geq s_0$ we use the crude estimate
$1-\Psi_n(s)\leq 1$ which suffices in view of $\int_{s_0}^\infty
s^{-p/2-1}{\rm d}s=(2/p)s_0^{-p/2}<\infty$. The proof of
\eqref{limit3} is complete.

As a consequence of \eqref{limit3} and \eqref{imp1} we obtain
\begin{equation*}
c\leq \me
\Big(\frac{\xi_1^2+\ldots+\xi_k^2}{k^{2/\alpha}}\Big)^{p/2}\leq C
\end{equation*}
for all $k\in\mn$ and appropriate $c, C>0$, whence
$$\me
\bigg(\sum_{j=1}^{\mm_n(\mr)}\xi_j^2 \bigg)^{p/2}=\me
[\mm_n(\mr)]^{p/\alpha} \me
\bigg(\bigg(\frac{\sum_{j=1}^{\mm_n(\mr)}\xi_j^2}
{\mm_n(\mr)^{2/\alpha}}\1_{\{\mm_n(\mr)\geq 1\}}
\bigg)^{p/2}\bigg|\mathcal{F}_n\bigg)\geq c \me
[\mm_n(\mr)]^{p/\alpha}.$$ Arguing similarly for the the upper
bound we arrive at
\begin{equation}\label{inter11}
c\me [\mm_n(\mr)]^{p/\alpha}\leq \me
\bigg(\sum_{j=1}^{\mm_n(\mr)}\xi_j^2 \bigg)^{p/2}\leq C\me
[\mm_n(\mr)]^{p/\alpha}
\end{equation}
which is equivalent to \eqref{import}.

\bigskip
\noindent  {\sc\underline{Case} $\theta\neq 0$}. Like in the
previous part of the proof, inequality \eqref{import1} follows if
we can show that
\begin{equation}\label{inter16}
\lim_{n\to\infty}\me\Bigg(\frac{\sum_{|u|=n}e^{-2\theta
S(u)}|Z_1^{(u)}(\lambda)-1|^2}{m(\alpha\theta)^{2n/\alpha}}\Bigg)^{p/2}=\me[
\eta_{\alpha/2} Z_\infty(\alpha\theta)^{2/\alpha}]^{p/2}<\infty
\end{equation}
assuming that $|Z_1(\lambda)-1|$ has the same distribution as
$\eta_1$. Here, $Z_\infty(\alpha\theta)$ is the a.s.\ and
$L_1$-limit of the uniformly integrable martingale
$(Z_n(\alpha\theta))_{n\in\mn_0}$. Furthermore,
$Z_\infty(\alpha\theta)$ is assumed independent of
$\eta_{\alpha/2}$. By \eqref{moments}, relation \eqref{inter16} is
equivalent to
\begin{equation}\label{limit7}
\lim_{n\to\infty}\int_0^\infty s^{-p/2-1}(1-\Phi_n(s)){\rm
d}s=\int_0^\infty s^{-p/2-1}(1-\Phi(s)){\rm d}s,
\end{equation}
where $$\Phi_n(s):=\me \exp\bigg(-s\frac{\sum_{|u|=n}e^{-2\theta
S(u)}|Z_1^{(u)}(\lambda)-1|^2}{m(\alpha\theta)^{2n/\alpha}}\Bigg),\quad
s\geq 0,~ n\in\mn$$ and $$\Phi(s):=\me \exp (-s
\eta_{\alpha/2}Z_\infty(\alpha\theta)^{2/\alpha})=\me
\exp(-b\Gamma(1-\alpha/2)s^{\alpha/2}Z_\infty(\alpha\theta)). $$

By Theorem 3 in \cite{Biggins:1998},
$$\frac{\sup_{|u|=n}e^{-2\theta S(u)}}{m(\alpha\theta)^{2n/\alpha}}=\Big(\frac{\sup_{|u|=n}
e^{-\alpha \theta
S(u)}}{m(\alpha\theta)^n}\Big)^{2/\alpha}~\to~0\quad \text{a.s.}$$
as $n\to\infty$. This in combination with \eqref{near zero}
yields, for $s\geq 0$,
\begin{eqnarray*}
&&-\log \me \bigg(\exp \Bigg(-s\frac{\sum_{|u|=n}e^{-2\theta
S(u)}|Z_1^{(u)}(\lambda)-1|^2}{m(\alpha\theta)^{2n/\alpha}}\Bigg)\bigg|\mathcal{F}_n\bigg)\\&=&\sum_{|u|=n}-\log
\varphi\Big(s\frac{e^{-2\theta
S(u)}}{m(\alpha\theta)^{2n/\alpha}}\Big)\\&~\sim~&
b\Gamma(1-\alpha/2)s^{\alpha/2}Z_n(\alpha\theta)~\to~
b\Gamma(1-\alpha/2)s^{\alpha/2}Z_\infty(\alpha\theta)\quad
\text{a.s.}
\end{eqnarray*}
as $n\to\infty$, thereby proving that
$$\lim_{n\to\infty}\Phi_n(s)=\Phi(s),\quad s\geq 0.$$ To justify \eqref{limit7} we shall use Lebesgue's dominated convergence theorem.
As a consequence of \eqref{near zero}, given $s_0>0$ small enough
there exist positive constants $B_1$ and $B_2$ such that
\begin{equation*}
\frac{s^{\alpha/2}}{1-\varphi(s)}\leq B_1\quad\text{and}\quad
\frac{1-\varphi(sx)}{1-\varphi(s)}\leq B_2x^{\alpha/2}
\end{equation*}
whenever $s\in (0, s_0]$ and $sx\in (0, s_0]$. Therefore, for
$s\in (0, s_0]$ and $S(u)$ with $|u|=n$,
\begin{eqnarray*}
\frac{1-\varphi\Big(s\frac{e^{-2\theta
S(u)}}{m(\alpha\theta)^{2n/\alpha}}\Big)}{1-\varphi(s)}
&=&\frac{1-\varphi\Big(s\frac{e^{-2\theta
S(u)}}{m(\alpha\theta)^{2n/\alpha}}\Big)}{1-\varphi(s)}\1_{\{\frac{e^{-2\theta
S(u)}}{m(\alpha\theta)^{2n/\alpha}}\leq \frac{s_0}{s}\}}+
\frac{1-\varphi\Big(s\frac{e^{-2\theta
S(u)}}{m(\alpha\theta)^{2n/\alpha}}\Big)}{1-\varphi(s)}\1_{\{\frac{e^{-2\theta
S(u)}}{m(\alpha\theta)^{2n/\alpha}}>\frac{s_0}{s}\}}\\&\leq&
\Big(B_2+\frac{B_1}{s_0^{\alpha/2}}\Big)\frac{e^{-\alpha\theta
S(u)}}{m(\alpha\theta)^n}\quad \text{a.s.}
\end{eqnarray*}
This yields, for each $n\in\mn$ and $s\in (0, s_0]$,
\begin{eqnarray*}
1-\Phi_n(s)&=&\me
\bigg(1-\prod_{|u|=n}\varphi\bigg(s\frac{e^{-2\theta
S(u)}}{m(\alpha\theta)^{2n/\alpha}}\bigg)\bigg)\leq
(1-\varphi(s))\me
\sum_{|u|=n}\frac{1-\varphi\Big(s\frac{e^{-2\theta
S(u)}}{m(\alpha\theta)^{2n/\alpha}}\Big)}{1-\varphi(s)}\\&\leq&
\Big(B_2+\frac{B_1}{s_0^{\alpha/2}}\Big)(1-\varphi(s))\me
\sum_{|u|=n} \frac{e^{-\alpha\theta S(u)}}{m(\alpha\theta)^n}=
\Big(B_2+\frac{B_1}{s_0^{\alpha/2}}\Big)(1-\varphi(s)).
\end{eqnarray*}
The so obtained majorant is appropriate because
$$\int_0^{s_0}s^{-p/2-1}(1-\varphi(s)){\rm d}s<\infty$$ as a consequence of $\me [\eta_1]^p<\infty$ (recall that
$p<\alpha$). When $s>s_0$ the crude bound $1-\Phi_n(s)\leq 1$
suffices, for $\int_{s_0}^\infty s^{-p/2-1}{\rm d}s<\infty$. The
proof of Lemma \ref{mom_est} is complete.
\end{proof}

For the proof of Theorem \ref{main2} we shall need a version of
Lemma 3.3 in \cite{Alsmeyer_etal:2009}.
\begin{lemma}\label{AIPR}
Assume that $m(p\theta)\geq m(\theta)^p$ and $\me [Z_1(\theta)]^p<\infty$ for some $p>1$ and $\theta\in\mr\backslash\{0\}$. Then $$\me [Z_n(\theta)]^p=O\Big(n^c\Big(\frac{m(p\theta)}{m(\theta)^p}\Big)^n\Big),\quad n\to\infty$$ for a finite nonnegative  constant $c$ (explicitly known).
\end{lemma}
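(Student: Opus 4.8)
The plan is to bound $\me[Z_n(\theta)]^p$ via the martingale-difference representation of $Z_n(\theta)-1$. With the notation $Y_u(\theta),\,Z_1^{(u)}(\theta)$ introduced just before Lemma \ref{mom_est}, one has $Z_k(\theta)=\sum_{|u|=k}Y_u(\theta)$ and $Z_{k+1}(\theta)=\sum_{|u|=k}Y_u(\theta)Z_1^{(u)}(\theta)$, hence
$$Z_n(\theta)-1=\sum_{k=0}^{n-1}\big(Z_{k+1}(\theta)-Z_k(\theta)\big)=\sum_{k=0}^{n-1}\sum_{|u|=k}Y_u(\theta)\big(Z_1^{(u)}(\theta)-1\big),$$
where, since $\theta\in\mr$, the inner summands are real and, given $\F_k$, independent, centered, and each $Z_1^{(u)}(\theta)$ is distributed as $Z_1(\theta)$. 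By Minkowski's inequality $\|Z_n(\theta)-1\|_p\le\sum_{k=0}^{n-1}\|Z_{k+1}(\theta)-Z_k(\theta)\|_p$, so the task reduces to estimating a single block $\|Z_{k+1}(\theta)-Z_k(\theta)\|_p$ and then summing a (polynomial times) geometric series; the two identities driving the block estimate are $\me\sum_{|u|=k}|Y_u(\theta)|^p=(m(p\theta)/m(\theta)^p)^{k}$ and $\me\big(\sum_{|u|=k}Y_u(\theta)^2\big)^{p/2}=(m(2\theta)/m(\theta)^2)^{pk/2}\,\me[Z_k(2\theta)]^{p/2}$. I would first record the routine facts that $m(s\theta)<\infty$ for every $s$ between $1$ and $p$ (convexity of the effective domain of $m$, together with $m(p\theta)=\me\sum_{|u|=1}e^{-p\theta S(u)}\le m(\theta)^p\me[Z_1(\theta)]^p<\infty$), that $\me[Z_1(2^j\theta)]^{p/2^j}\le\text{const}\cdot\me[Z_1(\theta)]^p<\infty$ for all $j$ (because $(\sum_i a_i^{2^j})^{p/2^j}\le(\sum_i a_i)^p$ for $a_i\ge0$), and that $\me[Z_n(\theta)]^p<\infty$ for every $n$ (induction on $n$, and likewise at each parameter/exponent occurring in the recursion below).

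I would then prove the assertion by induction on $d:=\lceil\log_2 p\rceil$, in the more general form: for every $\theta\in\mr\backslash\{0\}$ and every $P\in(1,2^d]$ with $m(P\theta)<\infty$, $m(P\theta)\ge m(\theta)^P$, and $\me[Z_1(\theta)]^P<\infty$, one has $\me[Z_n(\theta)]^P=O\big(n^{c}(m(P\theta)/m(\theta)^P)^n\big)$ for an explicit $c\ge0$. For $d=1$, i.e.\ $P\in(1,2]$, the von Bahr--Esseen inequality applied conditionally on $\F_k$ gives $\me|Z_{k+1}(\theta)-Z_k(\theta)|^P\le 2\,\me|Z_1(\theta)-1|^P\,(m(P\theta)/m(\theta)^P)^k$, and since $m(P\theta)/m(\theta)^P\ge1$, summing yields $\me[Z_n(\theta)]^P=O((m(P\theta)/m(\theta)^P)^n)$ when the ratio exceeds $1$ and $O(n^P)$ when it equals $1$.

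For $d\ge2$, so $P>2$, I would use Rosenthal's inequality conditionally on $\F_k$; with the two displayed identities this gives
$$\me|Z_{k+1}(\theta)-Z_k(\theta)|^P\le C_P\Big[\text{const}\cdot\Big(\frac{m(2\theta)}{m(\theta)^2}\Big)^{Pk/2}\me[Z_k(2\theta)]^{P/2}+\text{const}\cdot\Big(\frac{m(P\theta)}{m(\theta)^P}\Big)^{k}\Big],$$
the constants being finite moments of $Z_1(\theta)-1$. Here I would invoke the induction hypothesis with parameter $2\theta$ and exponent $P/2\in(2^{d-2},2^{d-1}]$: its hypotheses hold, since $m((P/2)\cdot2\theta)=m(P\theta)<\infty$, $\me[Z_1(2\theta)]^{P/2}<\infty$, and $m(P\theta)\ge m(2\theta)^{P/2}$ --- the last following from $m(P\theta)\ge m(\theta)^P$ because, with $\phi(t):=\log m(t\theta)$ convex and $2=\tfrac{P-2}{P-1}\cdot1+\tfrac{1}{P-1}\cdot P$ (valid as $P>2$), one has $\phi(2)\le\tfrac{P-2}{P-1}\phi(1)+\tfrac{1}{P-1}\phi(P)$, which combined with $P\phi(1)\le\phi(P)$ gives $\phi(2)\le\tfrac{2}{P}\phi(P)$. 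Thus $\me[Z_k(2\theta)]^{P/2}=O(k^{c'}(m(P\theta)/m(2\theta)^{P/2})^k)$, and the exact cancellation $(m(2\theta)/m(\theta)^2)^{P/2}\cdot(m(P\theta)/m(2\theta)^{P/2})=m(P\theta)/m(\theta)^P$ turns the first term into $O(k^{c'}(m(P\theta)/m(\theta)^P)^k)$; hence $\me|Z_{k+1}(\theta)-Z_k(\theta)|^P=O(k^{c'}(m(P\theta)/m(\theta)^P)^k)$, and summing the (polynomial times) geometric series (again $m(P\theta)/m(\theta)^P\ge1$) gives $\me[Z_n(\theta)]^P=O(n^{c'}(m(P\theta)/m(\theta)^P)^n)$ if the ratio exceeds $1$ and $O(n^{c'+P})$ if it equals $1$. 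This closes the induction, with $c$ the running sum of the exponents picked up at those levels where the corresponding ratio equals $1$; specializing to $P=p$ and the given $\theta$ gives the lemma --- the bound $O(n^{p}q^n)$ holding always and sharpening to $O(q^n)$ when $q:=m(p\theta)/m(\theta)^p>1$.

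The ingredients I regard as routine: the finiteness of $\me[Z_n(\theta)]^p$; the von Bahr--Esseen and Rosenthal inequalities for possibly infinite families of independent centered summands with summable $p$th moments; and the bookkeeping that the parameters $2^j\theta$ entering the recursion all lie strictly between $\theta$ and $p\theta$ (the recursion moves from level $j$ to level $j+1$ only while $P/2^j>2$, i.e.\ $2^{j+1}<P\le p$), so that $m$ is finite at each of them. The one genuinely non-routine point --- and the reason the argument must be organized as an induction on the exponent rather than a single recursion --- is the case $p>2$: Rosenthal's inequality unavoidably produces the quadratic term $\me[Z_k(2\theta)]^{p/2}$, which the hypotheses on $Z_1(\theta)$ do not control directly. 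Everything hinges on two structural facts: that passing from $(\theta,p)$ to $(2\theta,p/2)$ preserves the inequality $m(P\theta)\ge m(\theta)^P$, which is a consequence of the log-convexity of $\theta\mapsto m(\theta)$; and that the two prefactors combine to the correct constant, $(m(2\theta)/m(\theta)^2)^{p/2}\cdot m(p\theta)/m(2\theta)^{p/2}=m(p\theta)/m(\theta)^p$.
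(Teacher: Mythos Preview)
The paper does not supply its own proof of this lemma; it is quoted as ``a version of Lemma~3.3 in \cite{Alsmeyer_etal:2009}'' and left at that. Your argument is correct and is, in fact, essentially the proof given in that reference: write $Z_n(\theta)-1$ as a telescoping sum of martingale increments, control a single increment by the von~Bahr--Esseen inequality when the exponent lies in $(1,2]$ and by Rosenthal's inequality when it exceeds $2$, and close the recursion by induction on $d=\lceil\log_2 p\rceil$, passing from $(\theta,P)$ to $(2\theta,P/2)$. The two structural points you single out --- that log-convexity of $t\mapsto m(t\theta)$ transports the hypothesis $m(P\theta)\ge m(\theta)^P$ to $m(P\theta)\ge m(2\theta)^{P/2}$, and that the two prefactors in the Rosenthal bound telescope to $m(P\theta)/m(\theta)^P$ --- are precisely what make the induction go through.

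One small imprecision in your closing summary: the claim ``$O(n^{p}q^n)$ holding always'' undercounts the polynomial exponent in the degenerate situation where $q=1$ and $t\mapsto\log m(t\theta)$ is affine on $[1,p]$; then every intermediate ratio $q_j=m(p\theta)/m(2^j\theta)^{p/2^j}$ equals $1$ as well, and the exponents accumulate to $\sum_{j<d}p/2^j<2p$ rather than $p$. Your earlier, correct, formulation (``$c$ the running sum of the exponents picked up at those levels where the corresponding ratio equals $1$'') already covers this, and in any case the lemma only requires \emph{some} finite $c$. Conversely, your sharpening to $O(q^n)$ when $q>1$ is justified: the same convexity argument shows that $q_0>1$ forces $q_j>1$ for every $j$, so no polynomial factor is ever picked up.
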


We are now ready to prove our main results.
\begin{proof}[Proof of Theorem \ref{main3}.]

\noindent {\sc Necessity of \eqref{palpha} and \eqref{mom<2} or
\eqref{momleq2}}. Set $R:=\sum_{n\geq
0}|Z_{n+1}(\lambda)-Z_n(\lambda)|^2$ and assume that
$(Z_n(\lambda))_{n\in\mn_0}$ converges in $L^p$, $p\in (1,2)$.
Then $\me R^{p/2}<\infty$ by Lemma \ref{burk}. In particular, this
entails $\me |Z_1(\lambda)|^p<\infty$ thereby showing the
necessity of \eqref{palpha}.

Let $(a_n)_{n\geq 0}$ be a sequence of positive numbers which
satisfies $a:=\sum_{n\geq 0}a_n<\infty$. Since the function
$x\mapsto x^{p/2}$ is concave on $[0,\infty)$ we infer
\begin{eqnarray*}
R^{p/2}&=&a^{p/2}\Big(\sum_{n\geq 0}(a_n/a)
(1/a_n)|Z_{n+1}(\lambda)-Z_n(\lambda)|^2\Big)^{p/2}\\&\geq&
a^{p/2}\sum_{n\geq 0}(a_n/a)
(1/a_n)^{p/2}|Z_{n+1}(\lambda)-Z_n(\lambda)|^p\\&=&a^{p/2-1}\sum_{n\geq
0}a_n^{1-p/2}|Z_{n+1}(\lambda)-Z_n(\lambda)|^p.
\end{eqnarray*}
Given $\F_n$, the random variable $Z_{n+1}(\lambda)-Z_n(\lambda)$,
being a weighted sum of i.i.d.\ complex-valued zero-mean random
variables, is the terminal value of a martingale. Hence, Lemma
\ref{burk} applies and gives
\begin{eqnarray}\label{inter2}
C_p\me\Big(\sum_{|u|=n}|Y_u(\lambda)|^2|Z_1^{(u)}(\lambda)-1|^2\Big)^{p/2}&\geq&
\me |Z_{n+1}(\lambda)-Z_n(\lambda)|^p\notag\\&\geq&
c_p\me\Big(\sum_{|u|=n}|Y_u(\lambda)|^2|Z_1^{(u)}(\lambda)-1|^2\Big)^{p/2}=:c_pA_n
\end{eqnarray}
(the left-hand inequality is not needed here and will be used
later).

Assume that condition \eqref{imp2} holds. 
Then $\alpha=2$ by our convention.
Using once again concavity of $x\mapsto x^{p/2}$ on $[0,\infty)$
we obtain
$$A_n\geq \me |Z_1(\lambda)-1|^p\me
\Big(\sum_{|u|=n}|Y_u(\lambda)|^2\Big)^{p/2}=\Big(\frac{m(0)}{|m(\lambda)|^2}\Big)^{np/2}\me
|Z_1(\lambda)-1|^p\me [Z_n(0)]^{p/2}$$ and thereupon
\begin{equation}\label{inter10}
\infty>\me R^{p/2}\geq a^{p/2-1}c_p \me |Z_1(\lambda)-1|^p
\sum_{n\geq 0}a_n^{1-p/2}
\Big(\frac{m(0)}{|m(\lambda)|^\alpha}\Big)^{np/\alpha}\me
[Z_n(0)]^{p/\alpha}.
\end{equation}

Assume now that condition \eqref{imp1} holds. 
Then $\alpha\in
(p,2)$ (recall \eqref{palpha}). According to \eqref{import},
$$A_n\geq c\Big(\frac{m(0)}{|m(\lambda)|^\alpha}\Big)^{np/\alpha}\me
[Z_n(0)]^{p/\alpha}$$ and thereupon
$$\infty>\me R^{p/2}\geq a^{p/2-1}c c_p \sum_{n\geq
0}a_n^{1-p/2}
\Big(\frac{m(0)}{|m(\lambda)|^\alpha}\Big)^{np/\alpha}\me
[Z_n(0)]^{p/\alpha}.$$ Observe that the series on the right-hand
side is the same as in \eqref{inter10}. Further, we have to
consider two cases.

\noindent {\sc \underline{Case} $\me Z_1(0)\log^+ Z_1(0)<\infty$.}
According to the Kesten-Stigum theorem, already mentioned in
Section \ref{results}, $Z_n(0)$ converges a.s.\ and in mean as
$n\to\infty$ to a random variable $Z_\infty(0)$. Therefore,
$\lim_{n\to\infty}\,\me [Z_n(0)]^{p/\alpha} =\me
[Z_\infty(0)]^{p/\alpha}\in (0,\infty)$, and the necessity of
\eqref{mom<2} follows upon setting
\begin{equation}\label{seq}
a_n=\Big(\frac{m(0)}{|m(\lambda)|^\alpha}\Big)^n,\quad n\in\mn_0.
\end{equation}

\noindent {\sc \underline{Case} $\me Z_1(0)\log^+ Z_1(0)=\infty$.}
By Lemma \ref{senhey}, we have
$$\me [Z_n(0)]^{p/\alpha}~\sim~ \frac{\me [\tilde{Z}_\infty(0)]^{p/\alpha}}{\ell(m(0)^n)^{p/\alpha}},\quad n\to\infty$$
for some positive slowly varying $\ell$ with
$\lim_{t\to\infty}\,\ell(t)=\infty$. Assume that $\sum_{n\geq
0}\ell(m(0)^n)^{-p/\alpha}$ is a divergent series. Then choosing
$a_n$ as in \eqref{seq} we see that condition \eqref{mom<2} is
necessary. If the series $\sum_{n\geq 0}\ell(m(0)^n)^{-p/\alpha}$
converges then choosing any sequence $(a_n)_{n\in\mn_0}$ with the
property $\lim_{n\to\infty}e^{bn}a_n=\infty$ for any $b>0$, we
conclude that condition \eqref{momleq2} is necessary.

\noindent {\sc Sufficiency of \eqref{palpha} and \eqref{mom<2} or
\eqref{momleq2}}. By Lemma \ref{burk}, it suffices to show that
$\me R^{p/2}<\infty$. Using subadditivity of $x\mapsto x^{p/2}$ on
$[0,\infty)$ we obtain $$\me R^{p/2}\leq \sum_{n\geq 0}\me
|Z_{n+1}(\lambda)-Z_n(\lambda)|^p.$$ Further, in view of
\eqref{inter2} $$\me |Z_{n+1}(\lambda)-Z_n(\lambda)|^p\leq
C_p\me\Big(\sum_{|u|=n}|Y_u(\lambda)|^2|Z_1^{(u)}(\lambda)-1|^2\Big)^{p/2}.$$

Assume first that condition \eqref{imp2} holds,  
so that
$\alpha=2$. Using conditional Jensen's inequality yields
\begin{eqnarray*}
\me
\Big(\Big(\sum_{|u|=n}|Y_u(\lambda)|^2|Z_1^{(u)}(\lambda)-1|^2\Big)^{p/2}\Big|\mathcal{F}_n\Big)&\leq&
\me
\Big(\sum_{|u|=n}|Y_u(\lambda)|^2|Z_1^{(u)}(\lambda)-1|^2\Big|\mathcal{F}_n\Big)^{p/2}\\&=&\big[\me
|Z_1(\lambda)-1|^2\big]^{p/2}\Big(\frac{m(0)}{|m(\lambda)|^2}\Big)^{np/2}Z_n(0)^{p/2}~\text{a.s.},
\end{eqnarray*}
whence
$$\me R^{p/2}\leq C_p \big[\me
|Z_1(\lambda)-1|^2\big]^{p/2}\sum_{n\geq 0}
\Big(\frac{m(0)}{|m(\lambda)|^\alpha}\Big)^{np/\alpha}\me
[Z_n(0)]^{p/\alpha}.$$ Assume now that condition \eqref{imp1}
holds which together with \eqref{palpha} ensures that $\alpha\in
(p,2)$. In view of \eqref{import} $$A_n\leq C
\Big(\frac{m(0)}{|m(\lambda)|^\alpha}\Big)^{np/\alpha}\me
[Z_n(0)]^{p/\alpha}$$ which entails
$$\me R^{p/2}\leq C_pC \sum_{n\geq
0}\Big(\frac{m(0)}{|m(\lambda)|^\alpha}\Big)^{np/\alpha}\me
[Z_n(0)]^{p/\alpha}.$$ Arguing as in the proof of necessity we
conclude the following. If either $\me Z_1(0)\log^+ Z_1(0)=\infty$
and $A=\infty$, or $\me Z_1(0)\log^+ Z_1(0)<\infty$, then
condition \eqref{mom<2} is sufficient, whereas if\newline $\me
Z_1(0)\log^+ Z_1(0)=\infty$ and $A<\infty$, then condition
\eqref{momleq2} is sufficient. The proof of Theorem \ref{main3} is
complete.
\end{proof}

\begin{proof}[Proof of Theorem \ref{main4}.]
The proof is a simpler counterpart of the proof of Theorem
\ref{main3} which uses inequality \eqref{import1} rather than
\eqref{import}. We omit details.
\end{proof}

\begin{proof}[Proof of Proposition \ref{main1}.]
We have for $r$ satisfying \eqref{inter12}
\begin{eqnarray*}
\me R^{p/2}&\leq&
C_p\me\Big(\sum_{|u|=n}|Y_u(\lambda)|^2|Z_1^{(u)}(\lambda)-1|^2\Big)^{p/2}\\&\leq&
C_p
\me\Big(\sum_{|u|=n}|Y_u(\lambda)|^r|Z_1^{(u)}(\lambda)-1|^r\Big)^{p/r}\leq
C_p[\me |Z_1(\lambda)-1|^r]^{p/r}\sum_{n\geq
0}\Big(\frac{m(r\theta)}{|m(\lambda)|^r}\Big)^{np/r}<\infty
\end{eqnarray*}
which proves the result in view of Lemma \ref{burk}. The first
inequality was obtained in the proof of sufficiency in Theorem
\ref{main3}. The second and third are consequences of
subadditivity of $x\mapsto x^{r/2}$ and Jensen's inequality,
respectively.
\end{proof}

\begin{proof}[Proof of Theorem \ref{main2}.]
{\sc Necessity of \eqref{mom_Z}, \eqref{mom_m} and
\eqref{mom_Ztheta}}. Assume that $(Z_n(\lambda))_{n\in\mn_0}$
converges in $L^p$ and recall the notation $R=\sum_{n\geq
0}|Z_{n+1}(\lambda)-Z_n(\lambda)|^2$.  Then $\me R^{p/2}<\infty$
by Lemma \ref{burk}. Recalling that $p\geq 2$ and using
superadditivity of $x\mapsto x^{p/2}$ on $[0,\infty)$ we further
infer
\begin{equation}\label{inter}
\sum_{n\geq 0}\me |Z_{n+1}(\lambda)-Z_n(\lambda)|^p\leq \me R^{p/2}<\infty.
\end{equation}
On the one hand, we obtain for $A_n$ defined in \eqref{inter2},
$$A_n\geq\me
\sum_{|u|=n}|Y_u(\lambda)|^p|Z_1^{(u)}(\lambda)-1|^p=\me
|Z_1(\lambda)-1|^p\Big(\frac{m(p\theta)}{|m(\lambda)|^p}\Big)^n$$
having utilized the aforementioned superadditivity. In view of
\eqref{inter} this proves the necessity of \eqref{mom_Z} for
$p\geq 2$ and $m(p\theta)<|m(\lambda)|^p$. On the other hand, we
conclude that
\begin{eqnarray*}
A_n&\geq&\me \Big[\me \Big( \sum_{|u|=n}|Y_u(\lambda)|^2|Z_1^{(u)}(\lambda)-1|^2\Big|\mathcal{F}_n\Big)^{p/2}\Big]=\Big(\me |Z_1(\lambda)-1|^2\Big)^{p/2}\me \Big(\sum_{|u|=n}|Y_u(\lambda)|^2\Big)^{p/2}\\&\geq& \bigg(\me |Z_1(\lambda)-1|^2\Big(\frac{m(2\theta)}{|m(\lambda)|^2}\Big)^n\bigg)^{p/2},
\end{eqnarray*}
where the first and second inequalities are consequences of conditional and usual Jensen's inequality, respectively. This proves the necessity of $m(2\theta)<|m(\lambda)|^2$. Using the last chain of inequalities with $n=1$ we observe that $$\me \Big(\sum_{|u|=1}|Y_u(\lambda)|^2\Big)^{p/2}=\frac{1}{|m(\lambda)|^p}\me \Big(\sum_{|u|=1}e^{-2\theta S(u)}\Big)^{p/2}<\infty$$ which in combination with the already checked finiteness of $m(2\theta)$ proves the necessity of \eqref{mom_Ztheta}. Finally, if $\theta=0$, then conditions $m(0)>1$ and $m(0)<|m(\lambda)|^2$ imply that $|m(\lambda)|>1$. Therefore, $m(0)<|m(\lambda)|^p$ is a consequence of $m(0)<|m(\lambda)|^2$.

\medskip
\noindent {\sc Sufficiency of \eqref{mom_Z}, \eqref{mom_m} and
\eqref{mom_Ztheta}}. By Lemma \ref{burk}, it suffices to check
that $\me R^{p/2}<\infty$. Using the triangle inequality in
$L_{p/2}$ yields $$\me R^{p/2}\leq \bigg(\sum_{n\geq 0}\big[\me
|Z_{n+1}(\lambda)-Z_n(\lambda)|^p\big]^{2/p}\bigg)^{p/2}.$$ To
show that the right-hand side is finite, we write
\begin{eqnarray}
C_p^{-1}\me |Z_{n+1}(\lambda)-Z_n(\lambda)|^p&\leq& \me\bigg(\sum_{|u|=n}|Y_u(\lambda)|^2|Z_1^{(u)}(\lambda)-1|^2\bigg)^{p/2}\notag\\&=&\me\bigg(\sum_{|v|=n}|Y_v(\lambda)|^2 \sum_{|u|=n}\frac{|Y_u(\lambda)|^2}{\sum_{|v|=n}|Y_v(\lambda)|^2}|Z_1^{(u)}(\lambda)-1|^2\bigg)^{p/2}\notag\\&\leq& \me |Z_1(\lambda)-1|^p\me \bigg(\sum_{|u|=n}|Y_u(\lambda)|^2\bigg)^{p/2}\notag\\&=&\me |Z_1(\lambda)-1|^p \bigg(\frac{m(2\theta)}{|m(\lambda)|^2}\bigg)^{np/2}\me [Z_n(2\theta)]^{p/2}.\label{inter3}
\end{eqnarray}
We have used \eqref{inter2} for the first inequality and convexity of $x\mapsto x^{p/2}$ on $[0,\infty)$ for the second. Now we have to analyze the asymptotic behavior of $\me [Z_n(2\theta)]^{p/2}$ as $n\to\infty$. While doing so, distinguishing 
two cases  seems inevitable.

\noindent {\sc \underline{Case} $p=2$}. The right-hand side of
\eqref{inter3} is equal to $\me |Z_1(\lambda)-1|^2
\Big(\frac{m(2\theta)}{|m(\lambda)|^2}\Big)^n$. Therefore,
conditions $\me |Z_1(\lambda)|^2<\infty$ and
$m(2\theta)<|m(\lambda)|^2$ ensure $\me R <\infty$.

\noindent {\sc \underline{Case} $p>2$}.

\noindent {\sc Subcase $m(p\theta)<m(2\theta)^{p/2}$,
$\theta\in\mr$}. In view of the present assumption on $m$ and
\eqref{mom_Ztheta} we have $\sup_{n\geq 0}\,\me
[Z_n(2\theta)]^{p/2}<\infty$ by Proposition \ref{conv_Lp}. Hence,
the right-hand side of \eqref{inter3} is
$O\Big(\Big(\frac{m(2\theta)}{|m(\lambda)|^2}\Big)^{np/2}\Big)$.
This in combination with $m(2\theta)<|m(\lambda)|^2$ proves $\me
R^{p/2}<\infty$. If $\theta=0$, this completes the proof of
sufficiency because the complementary case considered below which
reads $m(0)\geq m(0)^{p/2}$ is impossible in view of $m(0)\in
(1,\infty)$.

\noindent {\sc Subcase $m(p\theta)\geq m(2\theta)^{p/2}$,
$\theta\in\mr\backslash\{0\}$}. In view of the present assumption
on $m$ and \eqref{mom_Ztheta} we can apply Lemma \ref{AIPR} with
$2\theta$ and $p/2$ replacing $\theta$ and $p$ to obtain $$\me
[Z_n(2\theta)]^{p/2}=O\Big(n^c\Big(\frac{m(p\theta)}{m(2\theta)^{p/2}}\Big)^n
\Big),\quad n\to\infty$$ for appropriate finite constant $c$.
Hence, the right-hand side of \eqref{inter3} is
$O\Big(n^c\Big(\frac{m(p\theta)}{|m(\lambda)|^p}\Big)^n\Big)$
which proves $\me R^{p/2}<\infty$ because
$m(p\theta)<|m(\lambda)|^p$.

The proof of Theorem \ref{main2} is complete.
\end{proof}

\bigskip

\noindent {\bf Acknowledgement}. A part of this work was done
while A. Iksanov was visiting Vannes in October 2017. He
gratefully acknowledges hospitality and the financial support by
Universit\'{e} de Bretagne-Sud.
The work has been partially supported by the National Natural Science Foundation of China (Grants nos.  11601019, 11731012, 11571052), by  the Natural Science Foundation of Hunan Province of China (Grant No. 2017JJ2271), and  by the Centre Henri Lebesgue (CHL,
ANR-11-LABX-0020-01, France).


\end{document}